\newtheorem{df}{Definition}[section]
\newtheorem{thm}[df]{Theorem}
\newtheorem{pro}[df]{Proposition}
\newtheorem{rema}[df] {Remark}
\newtheorem{lem}[df] {Lemma}
\def\sfstp{{\hskip-1em}{\bf.}{\hskip1em}}
\def\subject#1{\renewcommand{\thefootnote}{}
\footnote{\noindent 2010 \it Mathematics Subject Classification\rm . Primary 46H05; Secondary 15A09.\par
 \hskip.2truecm \it Key words and phrases. \rm (weighted) Moore-Penrose inverse,
(weighted) EP element, group inverse, Banach algebra, Banach space operator.\par
{#1}}}
\title{ \bf  WEIGHTED MOORE-PENROSE INVERTIBLE \\ AND WEIGHTED EP BANACH ALGEBRA ELEMENTS \/}
\author { \normalsize ENRICO BOASSO,  DRAGAN S.  DJORDJEVI\'C AND DIJANA  MOSI\'C}
\date{   }
\begin{document}
\maketitle \thispagestyle{empty} %\baselineskip=14pt

\subject{  }

\vskip 1truecm

\setlength{\baselineskip}{12pt}

\begin{abstract} \noindent The weighted Moore-Penrose inverse will be introduced and studied in the
context of Banach algebras. In addition, weighted EP Banach algebra elements
will be characterized. The Banach space operator case will be also considered. 
\end{abstract}

\section{\sfstp Introduction}\setcounter{df}{0}
\
\indent The Moore-Penrose inverse in Banach algebras was
introduced by V. Rako\v cevi\' c in \cite{R1}. Some of its basic
properties were studied in this  article, and  latter in
\cite{R2}, \cite{M} and  \cite{B1}. In the recent past a
particular class of Moore-Penrose invertible objects were studied,
namely, EP Banach space operators and EP Banach algebra elements,
in other words, Moore-Penrose invertible operators or elements of
an algebra such that they commute with their Moore-Penrose
inverse, see \cite{B1, B3, B2R, MD1}. It is worth noticing that EP
objects have been intensively studied in different contexts such
as matrices, Hilbert space bounded and linear maps,
$C^*$-algebras, rings with involution and naturally in the two mentioned above; moreover they consist in a generalization of
the notion normal and hermitian objects, see \cite[Theorems 3.1
and 3.3]{B2R}.\par

\indent In the recent work by Y. Tian and H. Wang  \cite{TW} the
concept of weighted EP matrices (matrices that commute with their
weighted Moore-Penrose inverse) was introduced. What is more, the
second and third authors studied weighted EP $C^*$-algebra
elements in \cite{MD2, MD3}. The main objective of this article is
to introduce and study similar objects in the contexts of Banach
space operators and Banach algebra elements.\par

\indent In fact, in section 2, after having recalled some basic definitions and results, the notion of weighted Moore-Penrose invertible Banach space
operators and Banach algebra elements is introduced and studied. Furthermore, in section 3 weighted EP Banach space operators
and Banach algebra elements are characterized using the concept of group inverse. It is worth noticing that the results presented
in this article apply to arbitrary Banach algebras, operators on Banach spaces and matrices with any norm.

\section{\sfstp The weighted Moore-Penrose inverse}\setcounter{df}{0}
\
\indent From now on  $X$ will denote a Banach space and $L(X)$ the
Banach algebra of all bounded and linear maps defined on and with values in $X$.
If $T\in L(X)$, then $N(T)$ and $R(T)$ will stand for the null space and the
range of $T$ respectively. In addition, $X^*$ will denote the dual space of
$X$ and if $T\in L(X)$, then $T^*\in L(X^*)$ will stand for the adjoint map of $T\in L(X)$.\par

\indent On the other hand, $A$ will denote a complex unital Banach algebra with unit $1$.
In addition, the set of all invertible elements of $A$ will be denoted
by $A^{-1}$. If $a\in A$, then $L_a \colon A\to A$
and $R_a\colon A\to A$ will denote the map defined by
left and right multiplications, respectively:
$$
L_a(x)=ax, \hskip2truecm R_a(x)=xa,
$$
where $x\in A$. Moreover, the following notation will be used:
$N(L_a)= a^{-1}(0)$ and $R(L_a)=aA$.

\indent Recall that an element $a\in A$ is called \it{regular}, \rm
if it has a \it{generalized inverse}, \rm namely if there exists $b\in A$ such that
$$
a=aba.
$$

\indent Furthermore, a generalized inverse $b$ of a regular
element $a\in A$ will be called \it{normalized}, \rm if $b$ is regular
and $a$ is a generalized inverse of $b$, equivalently,
$$
a=aba, \hskip2truecm b=bab.
$$

\indent Next follows the key notion in the definition of the (weighted)
Moore-Penrose inverse in the context of Banach algebras.\par

\begin{df}\label{def1}
Given a unital Banach algebra $A$, an element $a\in A$ is said to be hermitian,
if $\parallel exp(ita)\parallel =1$,  for all $ t\in\Bbb R$.
\end{df}

\indent As regard equivalent definitions and the main properties of hermitian Banach
algebra elements and Banach space operators, see for example \cite{V,L,P,BD, D}.
Recall that if $A$ is a $C^*$-algebra, then
$a\in A$ is hermitian if and only if $a$ is self-adjoint,
see \cite[Proposition 20, Chapter I, Section 12]{BD}.
Given $A$ a unital Banach algebra, the set of all Hermitian
elements of $A$ will be denoted by $H(A)$.
\par

\indent Now the notion of Moore-Penrose invertible Banach algebra element
will be recalled.\par

\begin{df}\label{def2}
 Let $A$ be a unital Banach algebra and $a\in A$. If there exists
$x\in A$ such that $x$ is a normalized generalized inverse of $a$ and
$ax$ and $xa$ are hermitian, then $x$
will be said to be the Moore-Penrose inverse of $a$, and it will be
denoted by $a^{\dag}$.
\end{df}

\indent Recall that according to \cite[Lemma 2.1]{R1},
there is at most one Moore-Penrose inverse. Concerning the Moore-Penrose inverse in Banach algebras,
see \cite{R1,R2,M,B1,B2R,B3,MD1}. In the context of $C^*$-algebras, see \cite{HM1, HM2, Ko2}.
For the original definition of the Moore-Penrose
inverse for matrices, see \cite{Pe}.\par

\indent Next  the weighted Moore-Penrose inverse will be considered. Recall that
given   a $C^*$-algebra $A$ and $a\in A$,
$b\in A$ is said to be the \it weighted Moore-Penrose inverse
 \rm of $a$ with weights $e$ and $f$, if the following identities hold:
$$
aba=a,\hskip.5truecm bab=b,\hskip.5truecm (eab)^*=eab,\hskip.5truecm (fba)^*=fba,
$$
where $e$ and $f$ are positive and invertible elements in $A$, see \cite{KDC,MD2,MD3}.\par
According to \cite{KDC}, the conditions defining the weighted Moore-Penrose inverse
can be rewritten as
 $$
aba=a,\hskip.5truecm bab=b,\hskip.5truecm (ab)^{*e}=ab,\hskip.5truecm (ba)^{*f}=ba,
$$
where $A^{*e}=(A^{*e}, \parallel\cdot\parallel_e)$ (respectively $A^{*f}=(A^{*f}, \parallel\cdot\parallel_f)$) is the $C^*$-algebra
with underlying space $A$, involution $x\to x^{*e}=e^{-1}x^*e$ (respectively $x\to x^{*f}=f^{-1}x^*f$)
and norm $\parallel x\parallel_e =\parallel e^{1/2}xe^{-1/2} \parallel$
(respectively $\parallel x\parallel_f=\parallel f^{1/2}xf^{-1/2} \parallel$),
$x\in A$. Next weighted Moore-Penrose invertible Banach algebra
elements will be introduced. To this end, however, some preparation in needed.\par

\indent Let $A$ be a complex unital Banach algebra and consider
$a\in A$. The element $a$ will be said to be \it positive\rm, if
$V(a)\subset \mathbb R_+$, where $V(a)=\{f(a)\colon f\in A^*,
\parallel f\parallel \le 1, f(1)=1\}$ (\cite[Definition 5, Chapter
V, Section 38]{BD}). Denote by $A_+$ the set of all positive elements
of $A$. Note that necessary and sufficient for $a\in A$ to be
positive is that $a$ is hermitian and $\sigma (a)\subset  \mathbb
R_+$  (\cite[Definition 5, Chapter V, Section 38]{BD}). Recall
that according to \cite[Lemma 7, Chapter V, Section 38]{BD}, if
$c\in A_+$, then there exists $d\in A_+$ such that $d^2=c$.
Moreover, according to \cite[Theorem]{G}, the square root is
unique. In particular, the square root of $c$ will be denoted by
$c^{1/2}$. For the definition and equivalent conditions of
positive $C^*$-algebra elements, see \cite[Definition 3.1 and
Theorem 3.6, Chapter VIII, Section 3]{C}.
\par

\indent Given a complex  unital Banach algebra $A$ and $u\in A^{-1}\cap A_+$,
denote by $A^u=(A^u, \parallel \cdot \parallel_u)$ the complex unital Banach algebra with underlying
space $A$ and norm $\parallel x\parallel_u=\parallel u^{1/2}xu^{-1/2}\parallel$.
When $A$ is a $C^*$-algebra, according to \cite[Proposition 20, Chapter I, Section 12]{BD},
$a$ is self-adjoint in $(A^{*u}, \parallel\cdot\parallel_u)$ if and only if $a$
is hermitian in $(A^u,\parallel\cdot \parallel_u)$, where if $x\in A$, then as before
the involution in $A^{*u}$ is defined as follows:  $x\to x^{*u}=u^{-1}x^*u$.
These facts lead to the following definition.\par

\begin{df}\label{def3}Let $A$ be a complex unital Banach algebra and consider
$e$ and $f$ two positive and invertible elements in $A$. The element $a\in A$
will be said to be weighted Moore-Penrose invertible with weights $e$ and $f$,
if there exists $b\in A$ such that $b$ is a normalized generalized inverse of
$a$ and $ab$ (respectively $ba$) is a hermitian element of $A^e$
(respectively of  $A^f$).
\end{df}
\markright{\hskip4truecm WEIGHTED MOORE-PENROSE INVERSE } 
\indent Clearly, the conditions in
 Definition \ref{def3} extend the notion of weighted Moore-Penrose invertible
$C^*$-algebra element to Banach algebras (\cite{KDC}). Furthermore,
note that if $e=f$, then necessary and sufficient for $a\in A$ to
be weighted Moore-Penrose invertible with weight $e$
is that $a\in A^e$ (respectively $A^{*e}$, when $A$ is a $C^*$-algebra) is Moore-Penrose invertible. In particular,
when both weights coincide, the weighted Moore-Penrose inverse
reduces to the Moore-Penrose inverse, naturally changing 
the structure of the Banach or $C^*$-algebra.
In what follows, some basic properties of weighted Moore-Penrose invertible
Banach algebra elements will be studied.\par

\indent In first place, the uniqueness of the weighted Moore-Penrose inverse will be proved.
Moreover, ideas similar to the ones in \cite[Lemma 2.1]{R1} will be used. However, before considering 
the mentioned property, some preparation is needed.    \par

\begin{lem}\label{lem9}Let $A$ be a unital Banach algebra and consider $u\in A^{-1}\cap A_+$.
Then, $L_u\in L(A)$ is invertible and positive.
\end{lem}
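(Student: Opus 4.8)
The statement has two parts: $L_u$ is invertible in $L(A)$, and $L_u$ is positive as an element of the Banach algebra $L(A)$. The plan is as follows. Since $u \in A^{-1}$, left multiplication by $u^{-1}$ gives a two-sided inverse for $L_u$: indeed $L_{u^{-1}} L_u = L_{u^{-1}u} = L_1 = \mathrm{id}_A = L_u L_{u^{-1}}$, so $L_u \in L(A)^{-1}$ with $(L_u)^{-1} = L_{u^{-1}}$. That disposes of invertibility immediately.

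For positivity, I would use the spectral characterization recalled in the excerpt: an element of a unital Banach algebra is positive if and only if it is hermitian and its spectrum lies in $\mathbb{R}_+$. First I would check the spectral condition. The map $a \mapsto L_a$ is an isometric unital algebra homomorphism of $A$ into $L(A)$ (it is injective because $L_a(1) = a$), so $\sigma_{L(A)}(L_u) \subseteq \sigma_A(u)$; since $u$ is positive, $\sigma_A(u) \subseteq \mathbb{R}_+$, hence $\sigma_{L(A)}(L_u) \subseteq \mathbb{R}_+$ as well. Next I would check that $L_u$ is hermitian in $L(A)$, i.e. $\|\exp(it L_u)\| = 1$ for all $t \in \mathbb{R}$. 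Because $a \mapsto L_a$ is a continuous algebra homomorphism, $\exp(it L_u) = L_{\exp(itu)}$, and since left multiplication is an isometry on $A$ (using that $A$ is unital, $\|L_c\| = \|c\|$... more carefully, $\|L_c\| \le \|c\|$ always and $\|L_c\| \ge \|L_c(1)\| = \|c\|$), we get $\|L_{\exp(itu)}\| = \|\exp(itu)\| = 1$ because $u$ is hermitian (positivity implies hermitian). Hence $L_u$ is hermitian in $L(A)$, and combined with the spectral inclusion, $L_u$ is positive in $L(A)$.

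The only point requiring a little care — and the likely main obstacle — is the spectral inclusion $\sigma_{L(A)}(L_u) \subseteq \sigma_A(u)$. This follows from the homomorphism property: if $\lambda \notin \sigma_A(u)$ then $u - \lambda 1 \in A^{-1}$, and applying the homomorphism $a \mapsto L_a$ sends its inverse $(u-\lambda 1)^{-1}$ to an inverse of $L_u - \lambda\, \mathrm{id}_A = L_{u-\lambda 1}$, so $\lambda \notin \sigma_{L(A)}(L_u)$. One should note the inclusion may be strict in general, but that is harmless here since we only need containment in $\mathbb{R}_+$. Everything else is a routine verification using that $\|L_c\| = \|c\|$ in a unital Banach algebra and that the exponential commutes with bounded homomorphisms.
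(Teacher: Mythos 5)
Your proof is correct and follows essentially the same route as the paper: invertibility via $L_{u^{-1}}$, hermiticity of $L_u$ from $\|\exp(itL_u)\|=\|L_{\exp(itu)}\|=\|\exp(itu)\|=1$ using $\|L_c\|=\|c\|$, and then the spectral condition. The only cosmetic difference is that you derive the inclusion $\sigma_{L(A)}(L_u)\subseteq\sigma_A(u)$ directly from the homomorphism property, whereas the paper cites the equality $\sigma(L_u)=\sigma(u)$ from Bonsall--Duncan; both suffice.
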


\begin{proof} It is clear that $L_u\in L(A)$ is invertible. In addition,
note that  since $A$ is unital, $\parallel L_u\parallel=\parallel u\parallel$.
In particular, since
$$
\parallel exp(itL_u)\parallel =\parallel L_{exp (itu)}\parallel
=\parallel exp (itu)\parallel=1,
$$
 $L_u\in H(L(A))$. Moreover, since according to \cite[Proposition 4, Chapter I, Section 5 ]{BD}
$\sigma (L_u)=\sigma (u)$, $L_u\in L(A)_+$ (\cite[Definition 5, Chapter V, Section 38]{BD}).
\end{proof}

\begin{pro}\label{prop4} Let $A$ be a complex unital Banach algebra and
consider $e$, $f\in A^{-1}\cap A_+$. Then, if $a\in A$, there is at most
one weighted Moore-Penrose inverse of $a$ with weights $e$ and $f$.
\end{pro}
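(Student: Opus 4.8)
The plan is to follow the pattern of the uniqueness proof \cite[Lemma 2.1]{R1}, the only new feature being that the relevant hermiticity conditions now live in the auxiliary algebras $A^e$ and $A^f$ rather than in $A$ itself. The point that makes this harmless is that $A^e$ and $A^f$ carry exactly the same algebra structure as $A$ (only the norm changes), so every purely algebraic identity is simultaneously available in all three, while any fact about hermitian elements — above all the theorem of Sinclair that a hermitian element of a complex unital Banach algebra has norm equal to its spectral radius, so that a hermitian nilpotent element must be $0$ (cf.\ \cite{BD}) — applies inside each of $A$, $A^e$, $A^f$ separately.

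So I would start from two weighted Moore-Penrose inverses $b$ and $c$ of $a$ with weights $e$ and $f$, and first extract the algebraic content of Definition \ref{def3}. From $aba=a$ and $bab=b$ one gets that $ab$ is idempotent, and since $aA=abaA\subseteq abA\subseteq aA$ one gets $abA=aA$; the same holds with $c$ in place of $b$, and dually $Aba=Aa=Aca$. Since an idempotent acts as the identity on its own right (respectively left) ideal, this yields $(ab)(ac)=ac$, $(ac)(ab)=ab$ and likewise $(ba)(ca)=ba$, $(ca)(ba)=ca$. Expanding squares, $(ab-ac)^2=ab-ac-ab+ac=0$ and $(ba-ca)^2=ba-ba-ca+ca=0$; none of this uses any norm.

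Now I would pass to the Banach algebra $A^e$: by hypothesis $ab$ and $ac$ are hermitian in $A^e$, hence so is $ab-ac$, because the hermitian elements of a complex unital Banach algebra form a real-linear subspace. Thus $ab-ac$ is a hermitian, nilpotent element of $A^e$, so Sinclair's theorem forces $\|ab-ac\|_e=0$, i.e.\ $ab=ac$. Running the identical argument in $A^f$ with the hermitian idempotents $ba$ and $ca$ gives $ba=ca$. Finally I combine the two equalities: $b=bab=b(ab)=b(ac)=(ba)c=(ca)c=cac=c$, which is the claim.

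The only genuine obstacle is the implication ``$ab-ac$ hermitian and nilpotent $\Rightarrow$ $ab-ac=0$'', which requires that the norm–spectral-radius identity for hermitian elements be usable in $A^e$ and $A^f$, not merely in $A$. This is legitimate precisely because $A^e$ and $A^f$ are themselves complex unital Banach algebras and ``hermitian element'' is a notion intrinsic to each of them, so nothing beyond the statement valid in an arbitrary Banach algebra is needed. (Alternatively the whole proof can be carried out inside $A$ by transporting everything through the isometric algebra isomorphism $x\mapsto e^{1/2}xe^{-1/2}$ of $A^e$ onto $A$, but the direct argument above is more transparent.)
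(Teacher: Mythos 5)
Your proof is correct, but it follows a genuinely different route from the paper's. You stay inside the algebras $A^e$ and $A^f$ and run Rako\v cevi\'c's original argument: $ab-ac$ and $ba-ca$ are square-zero (a purely algebraic computation from $aba=a=aca$, $bab=b$, $cac=c$), they are hermitian because $H(A^e)$ and $H(A^f)$ are real-linear subspaces, and a hermitian quasinilpotent element vanishes since its norm equals its spectral radius; the concluding chain $b=bab=b(ac)=(ca)c=c$ is the same as the paper's. The paper instead passes to the left and right multiplication operators $L_{ab},L_{ac}\in L(A)^{L_e}$ and $R_{ba},R_{ca}\in L(A)^{R_f}$, checks that these are hermitian idempotents with $R(L_{ab})=R(L_a)=R(L_{ac})$, and invokes Palmer's theorem \cite[Theorem 2.2]{P} that a hermitian idempotent is determined by its range. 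Your version is more elementary and self-contained (it needs only the real-linearity of the set of hermitian elements and the Sinclair/Vidav norm--spectral-radius fact, both available in any complex unital Banach algebra, hence in $A^e$ and $A^f$); the paper's version buys consistency with the machinery it reuses immediately afterwards, e.g.\ the uniqueness of the idempotents $P$ and $Q$ in Theorem \ref{thm7}, which is proved by exactly the same Palmer-type argument. Your closing remark that one could equivalently transport everything through the isometric isomorphism $x\mapsto e^{1/2}xe^{-1/2}$ of $A^e$ onto $A$ is also sound, though unnecessary for the direct argument.
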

\begin{proof} Suppose that $b$ and $c$ are two weighted Moore-Penrose inverses
of $a$ with weights $e$ and $f$ and consider $L_{ab}$, $L_{ac}\in
L(A)^{L_e}$. A straightforward calculation proves that
$$
\parallel exp(itL_{ab})\parallel_{L_e} =\parallel exp (itab)\parallel_e = 1,
\hskip.5truecm
\parallel exp(itL_{ac})\parallel_{L_e}=\parallel exp (itac)\parallel_e = 1,
$$
equivalently,  $L_{ab}$ and $L_{ac}$ are two hermitian idempotents
of $L(A)^{L_e}$. Moreover, since $b$ and $c$ are two normalized
generalized inverse of $a$, it is not difficult to prove that
$R(L_{ab})=R(L_a)=R(L_{ac})$. Therefore, according to
\cite[Theorem 2.2]{P}, $L_{ab}=L_{ac}$. In particular,
$ab=ac$.\par \indent A similar argument, using in particular
$R_{ba}$ and $R_{ca}$ instead of $L_{ab}$ and $L_{ac}$
respectively, proves that $ba=ca$.\par \indent Then,
$$
b=bab=cab=cac=c.
$$
\end{proof}
\indent According to Proposition  \ref{prop4}, given a complex unital Banach algebra and $a\in A$, if
 the weighted Moore-Penrose inverse of $a$ exists, then it will be  denoted by
$a_{e,f}^{\dag}$. In the following remarks some elementary facts that will be used in this article
will be presented.\par
 \markboth{  }{ \hskip1.5truecm \rm ENRICO BOASSO, DRAGAN S. DJORDJEVI\'C AND  DIJANA  MOSI\'C }
\begin{rema}\label{rema5} \rm (a) Let $A$ be a complex unital Banach algebra and consider
$e$, $f\in  A^{-1}\cap A_+$. Suppose that $a_{e,f}^{\dag}$ exists, $a\in A$. Then, the following statements
can be easily derived from the conditions characterizing the weighted  Moore-Penrose inverse of $a$.\par
\noindent (i)  $(a_{e,f}^{\dag})_{f,e}^{\dag}=a$.\par
\noindent (ii)  $a_{e,f}^{\dag}A=a_{e,f}^{\dag}aA$, $aa_{e,f}^{\dag}A=aA$.  \par
\noindent (iii) $(aa_{e,f}^{\dag})^{-1}(0) =(a_{e,f}^{\dag})^{-1}(0)$, $(a_{e,f}^{\dag}a)^{-1}(0) =a^{-1}(0)$.\par
\noindent (iv) $A=a_{e,f}^{\dag}A\oplus a^{-1}(0)= aA\oplus (a_{e,f}^{\dag})^{-1}(0)$.\par

\noindent (b) Suppose that $A=L(X)$, $X$ a Banach space. Let $E$, $F\in L(X)$ be two
invertible and positive operators and consider $T\in L(X)$ such that
$T_{E,F}^{\dag}$ exists.   Then, it is not difficult to prove the following facts.\par
\noindent (v)  $R(T_{E,F}^{\dag}T)=R(T_{E,F}^{\dag})$, $R(TT_{E,F}^{\dag})=R(T)$.  \par
\noindent (vi) $N(T_{E,F}^{\dag}T)=N(T)$, $N(TT_{E,F}^{\dag})=N(T_{E,F}^{\dag})$.\par
\noindent (vii) $X=R(T_{E,F}^{\dag})\oplus N(T)=R(T)\oplus N(T_{E,F}^{\dag})$.\par
\end{rema}

\indent Next conditions equivalent to the existence of the weighted Moore-Penrose
inverse will be given. Firstly, the case $A=L(X)$, $X$ a Banach space, will be considered.\par

\begin{thm} \label{thm7}Let $X$ be a Banach space and consider $E$, $F\in L(X)$
two invertible and positive operators.
Then, if $T\in L(X)$, the following statements are equivalent:\par
\noindent  (i) \hskip.18cm $T^{\dag}_{E,F}$ exists;\par
\noindent (ii) there exist two idempotents $P$,$Q\in L(X)$ such that $P\in H(L(X)^E)$,
$R(P)=R(T)$, and $Q\in H(L(X)^F)$, $N(Q)=N(T)$.\par
\noindent Furthermore, if such $P$ and $Q$ exist, then they are unique.\par
 \end{thm}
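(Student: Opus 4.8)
The plan is to prove the two implications separately, with the forward direction $(i)\Rightarrow(ii)$ being essentially a matter of \emph{defining} $P$ and $Q$ from the weighted Moore-Penrose inverse, and the reverse direction $(ii)\Rightarrow(i)$ requiring us to \emph{construct} a candidate inverse and verify all four Penrose-type conditions. For $(i)\Rightarrow(ii)$, I would set $P=TT^{\dag}_{E,F}$ and $Q=T^{\dag}_{E,F}T$. From $T=TT^{\dag}_{E,F}T$ and $T^{\dag}_{E,F}=T^{\dag}_{E,F}TT^{\dag}_{E,F}$ one sees immediately that $P$ and $Q$ are idempotents; the hermiticity requirements $P\in H(L(X)^E)$ and $Q\in H(L(X)^F)$ are exactly two of the defining conditions of the weighted Moore-Penrose inverse in Definition \ref{def3}; and the range/nullspace identities $R(P)=R(T)$, $N(Q)=N(T)$ are precisely items (v) and (vi) of Remark \ref{rema5}(b). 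So this direction costs almost nothing.

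For $(ii)\Rightarrow(i)$, suppose $P$ and $Q$ are as given. The natural candidate is to use the direct sum decompositions $X=R(P)\oplus N(P)=R(T)\oplus N(P)$ and $X=R(Q)\oplus N(Q)=R(Q)\oplus N(T)$ (both genuine topological direct sums since $P$, $Q$ are bounded idempotents). I claim $T$ restricts to an invertible operator $T_0\colon R(Q)\to R(T)$: injectivity is clear since $N(T)=N(Q)$ meets $R(Q)$ trivially, and surjectivity holds because $R(T)=T(X)=T(R(Q)\oplus N(Q))=T(R(Q))$. By the open mapping theorem $T_0$ has a bounded inverse, and I would define $S\in L(X)$ by $S=T_0^{-1}$ on $R(T)$ and $S=0$ on $N(P)$. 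A direct check then gives $TS=P$ (both act as identity on $R(T)$ and as $0$ on $N(P)$) and $ST=Q$ (both act as identity on $R(Q)$ and as $0$ on $N(T)$), whence $TST=PT=T$ (using $R(T)=R(P)$) and $STS=QS=S$ (using $N(Q)=N(T)\subseteq N(S)$ and $R(S)=R(Q)$). Since $TS=P\in H(L(X)^E)$ and $ST=Q\in H(L(X)^F)$, all the conditions of Definition \ref{def3} are met, so $S=T^{\dag}_{E,F}$.

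Uniqueness of $P$ and $Q$ then follows either from Proposition \ref{prop4} (since any such $P,Q$ force $S$ to be \emph{a} weighted Moore-Penrose inverse, which is unique, hence $P=TS$ and $Q=ST$ are determined) or directly: two hermitian idempotents of $L(X)^E$ with the same range coincide by \cite[Theorem 2.2]{P}, and dually two hermitian idempotents of $L(X)^F$ with the same nullspace coincide by the same result applied to adjoints (or, equivalently, by the argument already used in the proof of Proposition \ref{prop4}). The main obstacle, such as it is, is the reverse direction: one must be careful that the algebraic identities $TS=P$, $ST=Q$ really hold on the nose (not merely up to something in a complemented subspace), which is why phrasing $S$ via the invertible restriction $T_0$ and the two idempotent decompositions is cleaner than trying to guess a closed formula. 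Everything else is bookkeeping with Remark \ref{rema5}.
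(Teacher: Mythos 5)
Your proposal is correct and follows essentially the same route as the paper: for (i)$\Rightarrow$(ii) take $P=TT^{\dag}_{E,F}$, $Q=T^{\dag}_{E,F}T$, and for (ii)$\Rightarrow$(i) invert the restriction $T\mid_{R(Q)}^{R(T)}$ and extend by zero on $N(P)$, exactly as in the paper's construction of $S$. The only cosmetic differences are that you verify $TS=P$ and $ST=Q$ by direct evaluation on the two decompositions (the paper instead matches ranges and null spaces of the idempotents $TS$ and $P$) and that the paper settles uniqueness of $Q$ via $R(I-Q)=R(I-Q')$ and \cite[Theorem 2.2]{P} rather than via adjoints.
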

\begin{proof}
\indent If $T^{\dag}_{E,F}$ exists, then consider
$P= TT^{\dag}_{E,F}$ and $Q=T^{\dag}_{E,F}T$.\par
\indent On the other hand, suppose that statement (ii) holds.
Consider the invertible operator $T'\in L(R(Q), R(T))$,
$$
T'=T\mid_{R(Q)}^{R(T)}\colon R(Q)\to R(T),
$$
and define $S\in L(X)$ as follows:
$$
S\mid_{N(P)}\equiv 0,\hskip1cm S\mid_{R(T)}^{R(Q)} =(T')^{-1}\in
L(R(T),R(Q)).
$$
\indent Since $R(P)=R(T)$, an easy calculation proves that $S$ is a normalized
generalized inverse of $T$. Moreover, since $TS$ and $P$
are idempotents of $L(X)$ such that
$$
R(TS)=R(T)=R(P), \hskip1cm N(TS)=N(S)=N(P),
$$
it is clear that $TS=P$. In particular, $TS\in H(L(X)^E)$.
A similar argument proves that $ST\in H(L(X)^F)$.
Consequently, $S=T^{\dag}_{E,F}$.\par
\markright{\hskip4truecm WEIGHTED MOORE-PENROSE INVERSE } 
\indent Now suppose that $P'$ and $Q'$ are two idempotents
that satisfy statement (ii). Then, $R(P) =R(P')$ and $R(I-Q)=R(I-Q')$.
Then, according to \cite[Hilfssatz 2(a)-(b)]{V} and \cite[Theorem 2.2]{P}, $P=P'$
and $Q=Q'$.
\end{proof}

\indent In the following theorem weighted Moore-Penrose invertible Banach algebra elements will be characterized.    \par

\begin{thm} \label{thm8}Let $A$ be a complex unital Banach algebra
and consider $e$, $f\in  A^{-1}\cap A_+$. If $a$ and $b\in A$ are such that $b$ is a normalized generalized
inverse of $a$, then the following statements are equivalent.\par
\noindent (i) $b=a_{e,f}^{\dag}$;\par
\noindent (ii) $L_b=(L_a)_{L_e,L_f}^{\dag}\in L(A)$.\par
\noindent In particular, if  statements (i)-(ii) hold, then $(L_a)_{L_e,L_f}^{\dag}=L_{a_{e,f}^{\dag}}$.
\end{thm}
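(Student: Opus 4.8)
\emph{Proof plan.} The plan is to transfer the whole statement through the left regular representation $L\colon A\to L(A)$, $x\mapsto L_x$, which is an injective unital algebra homomorphism and, $A$ being unital, an isometry ($\parallel L_x\parallel=\parallel x\parallel$, as already noted in the proof of Lemma \ref{lem9}). First I would record the compatibility of $L$ with the twisted norms. By Lemma \ref{lem9}, $L_e,L_f\in L(A)^{-1}\cap L(A)_+$, so that the Banach algebras $L(A)^{L_e}$, $L(A)^{L_f}$ and the element $(L_a)^{\dag}_{L_e,L_f}$ are well defined. Moreover $e^{1/2}$ is invertible (it commutes with $e^{-1}$ and $(e^{1/2})^2=e\in A^{-1}$) and positive, hence Lemma \ref{lem9} applies to $e^{1/2}$ and yields $L_{e^{1/2}}\in L(A)^{-1}\cap L(A)_+$ with $(L_{e^{1/2}})^2=L_e$; by uniqueness of the positive square root (\cite[Theorem]{G}) this forces $(L_e)^{1/2}=L_{e^{1/2}}$, and likewise $(L_f)^{1/2}=L_{f^{1/2}}$. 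Therefore, for every $x\in A$,
$$
\parallel L_x\parallel_{L_e}=\parallel L_{e^{1/2}}L_xL_{e^{-1/2}}\parallel=\parallel L_{e^{1/2}xe^{-1/2}}\parallel=\parallel e^{1/2}xe^{-1/2}\parallel=\parallel x\parallel_e,
$$
so that $L$ restricts to an isometric unital homomorphism $A^e\to L(A)^{L_e}$, and similarly $A^f\to L(A)^{L_f}$.

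Second, I would deduce the hermitian transfer principle. Since $L$ is a continuous homomorphism, $exp(itL_x)=L_{exp(itx)}$ for every $t\in\mathbb R$, so the identity above gives $\parallel exp(itL_x)\parallel_{L_e}=\parallel exp(itx)\parallel_e$; consequently $x\in H(A^e)$ if and only if $L_x\in H(L(A)^{L_e})$, and in the same way with $f$ in place of $e$. (This is exactly the computation already carried out in the proof of Proposition \ref{prop4}, now read as an equivalence rather than a one-way implication.)

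Third, I would observe that the standing hypothesis transfers: the identities $a=aba$ and $b=bab$ are equivalent, by injectivity of $L$, to $L_a=L_aL_bL_a$ and $L_b=L_bL_aL_b$, i.e. $L_b$ is a normalized generalized inverse of $L_a$. After this, the theorem is a matter of unwinding the definitions. Given that $b$ is a normalized generalized inverse of $a$, statement (i) says precisely that $ab\in H(A^e)$ and $ba\in H(A^f)$; by the transfer principle this is equivalent to $L_aL_b=L_{ab}\in H(L(A)^{L_e})$ and $L_bL_a=L_{ba}\in H(L(A)^{L_f})$, and since $L_b$ is already known to be a normalized generalized inverse of $L_a$, this is exactly statement (ii), namely $L_b=(L_a)^{\dag}_{L_e,L_f}$. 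Finally, when (i)--(ii) hold one has $L_{a^{\dag}_{e,f}}=L_b=(L_a)^{\dag}_{L_e,L_f}$, which proves the concluding assertion.

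The only step that requires genuine care is the identification $(L_e)^{1/2}=L_{e^{1/2}}$ (and its analogue for $f$): one must first check that $e^{1/2}$ is invertible so that Lemma \ref{lem9} may be applied to it, and then invoke uniqueness of positive square roots to pin down the square root of $L_e$. Everything downstream is routine bookkeeping with the homomorphism $L$; in particular no appeal to the Banach-space result Theorem \ref{thm7} is needed.
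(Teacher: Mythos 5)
Your proposal is correct and follows essentially the same route as the paper's own proof: both pass through the left regular representation, identify $(L_e)^{1/2}=L_{e^{1/2}}$ and $(L_f)^{1/2}=L_{f^{1/2}}$ via Lemma \ref{lem9} and the uniqueness theorem of Gardner, transfer the normalized generalized inverse condition by injectivity of $x\mapsto L_x$, and transfer the hermitian conditions through the identity $\parallel exp(itL_{ab})\parallel_{L_e}=\parallel exp(itab)\parallel_e$. Your version merely spells out a few details (invertibility of $e^{1/2}$, the isometry $\parallel L_x\parallel_{L_e}=\parallel x\parallel_e$) that the paper leaves implicit.
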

\begin{proof}
\indent First of all recall that $L_e$ and $L_f\in L(A)$ are invertible and positive (Lemma \ref{lem9}). \par
\indent Clearly,  $b$ is a normalized generalized inverse of $a$ in $A$ if and only if
$L_b$ is a normalized generalized inverse of $L_a$ in $L(A)$.
In addition,  according to \cite[Theorem]{G}, $(L_e)^{1/2}=L_{e^{1/2}}$ and  $(L_f)^{1/2}=L_{f^{1/2}}$ (Lemma \ref{lem9}).
Furthermore, since
$$
\parallel exp(itL_aL_b)\parallel_{L_e} =\parallel exp (it L_{ab})\parallel_{L_e}=\parallel L_{exp (itab)}\parallel_{L_e}
=\parallel exp (itab)\parallel_e,
$$
and
$$
\parallel exp(itL_bL_a)\parallel_{L_f} =\parallel exp (it L_{ba})\parallel_{L_f}=\parallel L_{exp (itba)}\parallel_{L_f}
=\parallel exp (itba)\parallel_f,
$$
$ab\in H(A^e)$  and $ba\in H(A^f)$ if and only if $L_aL_b\in H (L(A)^{L_e})$
and $L_bL_a\in H(L(A)^{L_f})$. Therefore, statements (i) and (ii) are equivalent.
\end{proof}

\indent In the following proposition the weighted Moore-Penrose inverse
will be described in a particular case. Recall that according to \cite[Thorem 6]{HM1},
the condition of being regular is equivalent to the one of being Moore-Penrose
invertible, for $C^*$-algebra elements. However, according to
\cite[Remark 4]{B1},
in a general Banach algebra these two notions are not in general equivalent.
Compare the next proposition with \cite[Theorem 5]{KDC}.\par

\begin{pro}\label{prop6} Let $A$ be a complex unital Banach algebra and consider
$e$, $f\in A^{-1}\cap A_+$. Then, if $a\in A$ is such that $e^{1/2}af^{-1/2}\in A$ is
Moore-Penrose invertible,
$$
a_{e,f}^{\dag} =f^{-1/2}(e^{1/2}af^{-1/2})^{\dag}e^{1/2}.
$$
\end{pro}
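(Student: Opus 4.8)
The plan is to set $c = e^{1/2} a f^{-1/2}$, assume $c^{\dag}$ exists, put $b = f^{-1/2} c^{\dag} e^{1/2}$, and check directly that $b$ satisfies all the requirements of Definition \ref{def3}; the uniqueness statement (Proposition \ref{prop4}) then forces $b = a_{e,f}^{\dag}$. First I would record the elementary preliminaries: since $(e^{1/2})^2 = e \in A^{-1}$ and $e^{1/2}$ commutes with $e$ and hence with $e^{-1}$, the element $e^{1/2}$ is invertible, its inverse being $e^{-1}e^{1/2}$, which by uniqueness of positive square roots (\cite[Theorem]{G}) coincides with $(e^{-1})^{1/2} =: e^{-1/2}$; likewise for $f$. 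This gives the factorization $a = e^{-1/2} c f^{1/2}$, which will be used repeatedly.

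Next I would verify that $b$ is a normalized generalized inverse of $a$. Substituting the factorizations of $a$ and $b$ and using $c c^{\dag} c = c$ and $c^{\dag} c c^{\dag} = c^{\dag}$, all the square-root factors telescope:
\[
aba = e^{-1/2}(c c^{\dag} c) f^{1/2} = e^{-1/2} c f^{1/2} = a, \qquad bab = f^{-1/2}(c^{\dag} c c^{\dag}) e^{1/2} = f^{-1/2} c^{\dag} e^{1/2} = b .
\]

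The remaining point — and essentially the only place where care is needed — is to show that $ab$ is hermitian in $A^e$ and $ba$ is hermitian in $A^f$. A direct computation gives $ab = e^{-1/2}(c c^{\dag}) e^{1/2}$ and $ba = f^{-1/2}(c^{\dag} c) f^{1/2}$. Since conjugation by an invertible element is an algebra automorphism, it commutes with the exponential series, i.e. $\exp(u x u^{-1}) = u\,\exp(x)\,u^{-1}$; hence for every $t \in \mathbb{R}$,
\[
\| \exp(it\, ab) \|_e = \| e^{1/2} \exp(it\, ab) e^{-1/2} \| = \bigl\| \exp\bigl( it\, e^{1/2}(ab) e^{-1/2} \bigr) \bigr\| = \| \exp(it\, c c^{\dag}) \| = 1 ,
\]
the last equality holding because $c^{\dag}$ is the Moore-Penrose inverse of $c$, so $c c^{\dag} \in H(A)$. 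The identical argument, with $f$ in place of $e$ and $c^{\dag} c$ in place of $c c^{\dag}$, shows $\| \exp(it\, ba) \|_f = 1$, that is, $ba \in H(A^f)$.

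Thus $b$ meets every condition in Definition \ref{def3}, and Proposition \ref{prop4} yields $a_{e,f}^{\dag} = b = f^{-1/2}(e^{1/2} a f^{-1/2})^{\dag} e^{1/2}$. I do not expect a genuine obstacle: the argument is computational, and the only thing to get right is the bookkeeping with the two distinct norms $\|\cdot\|_e$ and $\|\cdot\|_f$ and the systematic use of the conjugation identity $\exp(u x u^{-1}) = u\,\exp(x)\,u^{-1}$ with $u = e^{\pm 1/2}$ or $u = f^{\pm 1/2}$.
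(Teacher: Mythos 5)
Your proposal is correct and follows exactly the route of the paper's own proof: set $c=e^{1/2}af^{-1/2}$, $b=f^{-1/2}c^{\dag}e^{1/2}$, verify the normalized generalized inverse identities by telescoping, and observe that $ab=e^{-1/2}cc^{\dag}e^{1/2}$ and $ba=f^{-1/2}c^{\dag}cf^{1/2}$ are hermitian in $A^e$ and $A^f$ respectively because $cc^{\dag}$ and $c^{\dag}c$ are hermitian in $A$. You merely spell out the details (invertibility of $e^{1/2}$, the conjugation identity for $\exp$) that the paper leaves as ``not difficult to prove.''
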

\begin{proof} Let $c=e^{1/2}af^{-1/2}$ and $b=f^{-1/2}c^{\dag}e^{1/2}$.
Then, using in particular that $a=e^{-1/2}cf^{1/2}$, it is not difficult to prove
that $aba=a$, $bab=b$,
$ab= e^{-1/2}cc^{\dag}e^{1/2}$ and $ba=f^{-1/2}c^{\dag}cf^{1/2}$.
However, since $cc^{\dag}$ and $c^{\dag}c$ are hermitian elements of $A$,
$ab\in H(A^e)$ and $ba\in H(A^f)$.
\end{proof}

\indent Next weighted Moore-Penrose inverses in quotient algebras will be considered.
First of all, given a complex unital Banach algebra $A$ and $J$ a proper and closed
two sided ideal of $A$, the quotient map will be denoted by $\Pi\colon A\to A/J$.
In addition, if $a\in A$, then the quotient class of $a$ will be denoted by $\tilde{a}=\Pi (a)$.
\par

\begin{lem}\label{lem11}Let $A$ be a complex unital Banach algebra and consider
$J\subset A$ a proper and closed two sided ideal. Then, if $u\in A^{-1}\cap A_+$,
$\tilde{u}$ is invertible and positive in $A/J$.
\end{lem}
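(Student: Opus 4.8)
The plan is to use that the quotient map $\Pi\colon A\to A/J$ is a unital, contractive algebra homomorphism and to let both required properties descend to $A/J$ through it. Since $J$ is proper, $A/J$ is again a unital Banach algebra with unit $\tilde 1=\Pi(1)\neq 0$ (and $\parallel\tilde 1\parallel=1$), so the definitions and characterizations recalled earlier are available in $A/J$. Invertibility of $\tilde u$ is then immediate: as $u\in A^{-1}$ and $\Pi$ is a unital homomorphism, $\Pi(u^{-1})$ is a two-sided inverse of $\tilde u$, so $\tilde u\in (A/J)^{-1}$.

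For positivity I would argue directly from the definition of the algebra numerical range. Recall that $u\in A_+$ means $V(u)\subset\mathbb{R}_+$, where $V(u)=\{\varphi(u)\colon \varphi\in A^*,\ \parallel\varphi\parallel\le 1,\ \varphi(1)=1\}$; thus it suffices to show $V(\tilde u)\subset\mathbb{R}_+$. Let $\psi\in (A/J)^*$ satisfy $\parallel\psi\parallel\le 1$ and $\psi(\tilde 1)=1$, and put $\varphi=\psi\circ\Pi\in A^*$. Then $\varphi(1)=\psi(\tilde 1)=1$ and, since $\parallel\Pi\parallel\le 1$, also $\parallel\varphi\parallel\le 1$; hence $\psi(\tilde u)=\varphi(u)\in V(u)\subset\mathbb{R}_+$. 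As $\psi$ ranges over the defining set, this yields $V(\tilde u)\subset\mathbb{R}_+$, i.e.\ $\tilde u$ is positive in $A/J$.

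An equivalent route is to use the characterization recalled in the excerpt, namely that $\tilde u$ is positive precisely when it is hermitian and $\sigma_{A/J}(\tilde u)\subset\mathbb{R}_+$: hermiticity follows from $\parallel\exp(it\tilde u)\parallel=\parallel\Pi(\exp(itu))\parallel\le\parallel\exp(itu)\parallel=1$ (using that $u$, being positive, is hermitian) together with the reverse estimate $1=\parallel\tilde 1\parallel\le\parallel\exp(it\tilde u)\parallel\,\parallel\exp(-it\tilde u)\parallel$, while $\sigma_{A/J}(\tilde u)\subset\sigma_A(u)\subset\mathbb{R}_+$ because the homomorphic image of an invertible element is invertible. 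I do not expect a genuine obstacle here; the only steps that merit a word of justification are that $J$ being proper forces $\tilde 1\neq 0$ with $\parallel\tilde 1\parallel=1$, so that $A/J$ is a legitimate unital Banach algebra to which the cited definitions apply, and that the quotient map is norm-decreasing — both standard facts about quotients of Banach algebras by closed ideals.
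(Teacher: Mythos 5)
Your proposal is correct, and your primary argument takes a genuinely different route from the paper. The paper establishes positivity of $\tilde{u}$ via the characterization ``hermitian $+$ spectrum in $\mathbb R_+$'': it shows $\parallel \exp(it\tilde u)\parallel'\le 1$ and invokes \cite[Remark 2]{B1} to get $\tilde u\in H(A/J)$, and then controls the spectrum by pushing the square root $v\in A_+$ of $u$ into the quotient ($\tilde v^2=\tilde u$, whence $\sigma(\tilde u)\subset\mathbb R_+$). Your main argument instead works directly with the defining object, the numerical range: pulling back states of $A/J$ along the contractive unital map $\Pi$ gives $V(\tilde u)\subseteq V(u)\subset\mathbb R_+$, which is exactly \cite[Definition 5, Chapter V, Section 38]{BD}. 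This is shorter and avoids both the hermiticity discussion and the existence/uniqueness of square roots; what it costs is only the (standard, and correctly flagged by you) verification that $\tilde 1\ne 0$ and $\parallel\tilde 1\parallel=1$, so that the numerical-range machinery applies in $A/J$. Your alternative route is essentially the paper's proof, except that you obtain $\sigma(\tilde u)\subset\mathbb R_+$ from the cleaner spectral inclusion $\sigma_{A/J}(\tilde u)\subseteq\sigma_A(u)$ rather than from the square root; both are valid, and your hermiticity step correctly supplies the reverse inequality $1=\parallel\tilde 1\parallel\le\parallel \exp(it\tilde u)\parallel\,\parallel \exp(-it\tilde u)\parallel$ that the paper delegates to \cite[Remark 2]{B1}.
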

\begin{proof}Clearly, $\tilde{u}\in A/J$ is invertible.  In addition, if $\parallel\cdot\parallel'$
denotes the norm in $A/J$, then
$$
\parallel exp (it\tilde{u})\parallel' =\parallel \Pi (exp (it u))\parallel'\le \parallel exp(itu)\parallel=1.
$$
Consequently, according to \cite[Remark 2]{B1},  $\tilde{u}\in H(A/J)$. Furthermore,
since according to  \cite[Lemma 7, Chapter V, Section 38]{BD} there exists
$v\in A_+$ such that $v^2=u$, $\tilde{v}^2=\tilde{u}$ and
$\sigma (\tilde{u})\subset \mathbb R_+$. Therefore, according to
\cite[Definition 5, Chapter V, Section 38]{BD}, $\tilde{u}\in (A/J)_+$.
\end{proof}

\begin{thm} \label{thm12} Let $A$ be a complex unital Banach algebra and consider
$e$, $f\in A^{-1}\cap A_+$. If $a^{\dag}_{e,f}$ exists, then $\tilde{a}\in A/J$
is weighted Moore-Penrose invertible with weights $\tilde{e}$ and $\tilde{f}$. Furthermore,
$\tilde{a}^{\dag}_{\tilde{e},\tilde{f}}=\Pi(  a^{\dag}_{e,f})$.
\end{thm}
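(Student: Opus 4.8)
The plan is to take $b=a^{\dag}_{e,f}$ and show directly that $\tilde b=\Pi(b)$ satisfies the four defining conditions of the weighted Moore-Penrose inverse of $\tilde a$ with weights $\tilde e$ and $\tilde f$; the uniqueness statement (Proposition \ref{prop4}, which applies because $\tilde e,\tilde f\in (A/J)^{-1}\cap (A/J)_+$ by Lemma \ref{lem11}) will then give $\tilde a^{\dag}_{\tilde e,\tilde f}=\tilde b=\Pi(a^{\dag}_{e,f})$. First, since $\Pi\colon A\to A/J$ is a contractive unital algebra homomorphism, applying $\Pi$ to $aba=a$ and $bab=b$ yields $\tilde a\tilde b\tilde a=\tilde a$ and $\tilde b\tilde a\tilde b=\tilde b$, so $\tilde b$ is a normalized generalized inverse of $\tilde a$.

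The core step is to verify that $\tilde a\tilde b\in H((A/J)^{\tilde e})$ and $\tilde b\tilde a\in H((A/J)^{\tilde f})$. I would first record the identity $\tilde e^{\,1/2}=\Pi(e^{1/2})$: indeed, $\Pi(e^{1/2})$ is positive in $A/J$ by the argument already used in the proof of Lemma \ref{lem11}, its square is $\tilde e$, and the positive square root is unique by \cite[Theorem]{G}; consequently $\tilde e^{-1/2}=\Pi(e^{-1/2})$ as well, and similarly for $f$. Now $exp(it\,\tilde a\tilde b)=\Pi(exp(it\,ab))$ for all $t\in\mathbb R$ since $\Pi$ is a homomorphism, and since the quotient map is norm-decreasing,
$$
\parallel exp(it\,\tilde a\tilde b)\parallel_{\tilde e}=\parallel \Pi(e^{1/2})\,\Pi(exp(it\,ab))\,\Pi(e^{-1/2})\parallel'=\parallel \Pi(e^{1/2}exp(it\,ab)e^{-1/2})\parallel'\le \parallel exp(it\,ab)\parallel_e=1,
$$
where the last equality uses $ab\in H(A^e)$. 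Since $exp(it\,\tilde a\tilde b)$ is invertible with inverse $exp(-it\,\tilde a\tilde b)$, the bound $\parallel exp(it\,\tilde a\tilde b)\parallel_{\tilde e}\le 1$ for every $t$ (applied also with $-t$) forces $\parallel exp(it\,\tilde a\tilde b)\parallel_{\tilde e}=1$ for all $t$, as in \cite[Remark 2]{B1}; that is, $\tilde a\tilde b\in H((A/J)^{\tilde e})$. The identical computation with $f$ and $ba$ in place of $e$ and $ab$ gives $\tilde b\tilde a\in H((A/J)^{\tilde f})$.

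Combining these facts, $\tilde b$ is a normalized generalized inverse of $\tilde a$ with $\tilde a\tilde b$ hermitian in $(A/J)^{\tilde e}$ and $\tilde b\tilde a$ hermitian in $(A/J)^{\tilde f}$, so by Definition \ref{def3} it is a weighted Moore-Penrose inverse of $\tilde a$ with weights $\tilde e$ and $\tilde f$, whence $\tilde a^{\dag}_{\tilde e,\tilde f}=\tilde b=\Pi(a^{\dag}_{e,f})$ by Proposition \ref{prop4}. I expect the only genuinely delicate point to be the commutation of the square root with the quotient map, $\tilde e^{\,1/2}=\Pi(e^{1/2})$ — once that is in hand, everything else is a formal consequence of $\Pi$ being a contractive unital homomorphism. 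A slightly slicker alternative would be to pass to left-multiplication operators via Theorem \ref{thm8}, but the direct verification above seems the cleanest route here.
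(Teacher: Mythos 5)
Your proposal is correct and follows essentially the same route as the paper: apply $\Pi$ to the normalized generalized inverse identities, identify $\tilde e^{\,1/2}=\Pi(e^{1/2})$ via the uniqueness of positive square roots from \cite{G}, and use contractivity of the quotient map together with \cite[Remark 2]{B1} to transfer the hermitian conditions to $A/J$. The only differences are cosmetic: you spell out the argument behind \cite[Remark 2]{B1} and the positivity of $\Pi(e^{1/2})$ (via Lemma \ref{lem11}) slightly more explicitly than the paper does.
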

\begin{proof}It is clear that  $\Pi(  a^{\dag}_{e,f})$ is a normalized generalized inverse of
$\tilde{a}$. On the other hand, note that according to \cite[Theorem]{G}, $\Pi(e^{1/2})=\tilde{e}^{1/2}$,
$\Pi(e^{-1/2})=\tilde{e}^{-1/2}$, $\Pi(f^{1/2})=\tilde{f}^{1/2}$ and $\Pi(f^{-1/2})=\tilde{f}^{-1/2}$. As a result,
\begin{align*}
\parallel exp (it \tilde{a}\Pi(  a^{\dag}_{e,f}))\parallel'_{\tilde{e}}&= \parallel exp (it\Pi (a a^{\dag}_{e,f}))\parallel'_{\tilde{e}}
= \parallel \Pi(e^{1/2} exp (ita a^{\dag}_{e,f}) e^{-1/2})\parallel'\\
&\le\parallel e^{1/2} exp (ita a^{\dag}_{e,f}) e^{-1/2}\parallel=\parallel exp (ita a^{\dag}_{e,f}) \parallel_e=1;\\
\parallel exp (it \Pi(  a^{\dag}_{e,f}) \tilde{a})\parallel'_{\tilde{f}}&= \parallel exp (it\Pi ( a^{\dag}_{e,f}a))\parallel'_{\tilde{f}}
= \parallel \Pi(f^{1/2} exp (it a^{\dag}_{e,f}a) f^{-1/2})\parallel'\\
&\le\parallel f^{1/2} exp (it a^{\dag}_{e,f}a) f^{-1/2}\parallel=\parallel exp (it a^{\dag}_{e,f}a) \parallel_f=1.\\
\end{align*}
\indent However, according to \cite[Remark 2]{B1}, the proof is complete.
\end{proof}

\indent The weighted Moore-Penrose inverse in closed invariant subspaces  will be now studied. To this end, some preparation
is needed.  Note that if $X$ is a Banach space and $Y\subseteq X$ is a closed and invariant
subspace for $T\in L(X)$, then $T'=T\mid_Y\in L(Y)$ will stand for the restriction map
of $T$ to $Y$. \par
 \markboth{  }{ \hskip1.5truecm \rm ENRICO BOASSO, DRAGAN S. DJORDJEVI\'C AND  DIJANA  MOSI\'C }
\begin{pro}\label{pro13} Let $X$ be a Banach space and consider $U\in L(X)$
an invertible and positive operator. Let $Y\subseteq X$ be a closed subspace such that
$U(Y)=Y$. Then $U'\in L(Y)$ is invertible and positive. What is more,  $U^{1/2}(Y)\subseteq Y$,
$(U')^{1/2}=(U^{1/2})'\in L(Y)$, $U^{-1/2}(Y)\subseteq Y$ and
$(U')^{-1/2}=(U^{-1/2})'\in L(Y)$.

\end{pro}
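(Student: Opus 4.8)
The plan is to reduce everything to a statement about $L(Y)$ and invoke the uniqueness of the positive square root. First I would observe that, since $U \in L(X)$ is invertible and positive with $U(Y) = Y$, the restriction $U' = U\mid_Y \in L(Y)$ is well-defined and invertible (its inverse is the restriction of $U^{-1}$, which also leaves $Y$ invariant because $U^{-1}(Y) = U^{-1}(U(Y)) = Y$). To see that $U'$ is positive, I would use the characterization recalled in the excerpt: $U'$ is positive iff $U' \in H(L(Y))$ and $\sigma(U') \subseteq \mathbb{R}_+$. Hermicity of $U'$ follows from $\parallel \exp(itU') \parallel = \parallel \exp(itU)\mid_Y \parallel \le \parallel \exp(itU) \parallel = 1$ together with \cite[Remark 2]{B1} (the restriction of a hermitian operator to an invariant subspace is hermitian, exactly as in the quotient computation of Lemma~\ref{lem11}). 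For the spectral condition, the restriction of an invertible operator to an invariant subspace has spectrum contained in that of the original operator, so $\sigma(U') \subseteq \sigma(U) \subseteq \mathbb{R}_+$.

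The crux is the square-root assertion. The natural approach is: let $V = U^{1/2} \in L(X)_+$, the unique positive square root of $U$. I would first show $V(Y) \subseteq Y$. For this I would use a holomorphic functional calculus or approximation argument — for instance, the square root of a positive invertible operator can be obtained as a norm limit of polynomials in $U$ (on the spectrum $\sigma(U) \subseteq \mathbb{R}_+$ bounded away from $0$, $t \mapsto t^{1/2}$ is uniformly approximable by polynomials), and each polynomial in $U$ leaves $Y$ invariant; since $Y$ is closed, so does the limit $V$. Alternatively one may represent $V$ via the integral formula $V = \frac{1}{\pi}\int_0^\infty \lambda^{-1/2}(\lambda + U)^{-1} U\, d\lambda$ and note that each resolvent $(\lambda + U)^{-1}$ preserves $Y$. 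The same argument applied to $U^{-1}$ (or to $t \mapsto t^{-1/2}$) gives $U^{-1/2}(Y) \subseteq Y$.

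Once $V(Y) \subseteq Y$ is established, set $V' = V\mid_Y \in L(Y)$. Then $V'$ is a square root of $U'$ in $L(Y)$, and by the first part of the proof $V'$ is positive (hermicity of $V'$ from $\parallel \exp(itV') \parallel \le \parallel \exp(itV) \parallel = 1$, and $\sigma(V') \subseteq \sigma(V) \subseteq \mathbb{R}_+$). By the uniqueness of the positive square root in $L(Y)$ \cite[Theorem]{G}, $V' = (U')^{1/2}$, i.e. $(U^{1/2})' = (U')^{1/2}$. Applying this identity to the positive invertible operator $U^{-1}$ (which also satisfies $U^{-1}(Y) = Y$) yields $(U^{-1/2})' = (U^{-1})'^{1/2} = ((U')^{-1})^{1/2} = (U')^{-1/2}$, where the middle equality uses that the restriction of the inverse is the inverse of the restriction.

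I expect the main obstacle to be the invariance claim $U^{1/2}(Y) \subseteq Y$: it requires care about which functional-calculus representation of the square root one uses and why it is compatible with restriction to a closed invariant subspace. The approximation-by-polynomials route is probably the cleanest, since $\sigma(U)$ is a compact subset of $(0,\infty)$ and closedness of $Y$ then does the rest; but one must be slightly careful that the polynomial approximation to $t^{1/2}$ on $\sigma(U)$ converges in a way that transfers to operator norm, which follows from the hermicity of $U$ (so that $\parallel p(U) \parallel$ is controlled by $\sup_{\sigma(U)} |p|$ up to the usual constant for hermitian elements of a Banach algebra). Everything else is routine bookkeeping of the kind already carried out in Lemmas~\ref{lem9} and \ref{lem11}.
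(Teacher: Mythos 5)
Your overall strategy is the same as the paper's: produce $U^{1/2}$ by a functional--calculus expression whose building blocks visibly preserve $Y$, deduce $U^{1/2}(Y)\subseteq Y$, restrict, and identify the restriction with $(U')^{1/2}$ via the uniqueness theorem of \cite[Theorem]{G}; your resolvent--integral variant is precisely the paper's Riesz functional calculus with a particular choice of contour. The genuine gap is that you use, twice and without justification, the claim that restriction to a closed invariant subspace shrinks spectra and is compatible with resolvents: you assert $\sigma(U')\subseteq\sigma(U)$ as a general fact about invertible operators, and later that each $(\lambda+U)^{-1}$ preserves $Y$. Neither holds for general invariant subspaces (the restriction of the bilateral shift on $\ell^2(\mathbb Z)$ to $\ell^2(\mathbb N)$ has spectrum the closed unit disc rather than the unit circle, and the resolvent at interior points does not preserve the subspace), and the hypothesis $U(Y)=Y$ only controls the point $0$. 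What makes both claims true here is that $\sigma(U)$ is a compact subset of $(0,\infty)$, so $\mathbb C\setminus\sigma(U)$ is connected and every $\lambda\in\rho(U)$ lies in the unbounded component of the resolvent set, where $(U-\lambda)^{-1}$ is a norm limit of polynomials in $U$ and therefore leaves the closed subspace $Y$ invariant; this yields both $\sigma(U')\subseteq\sigma(U)$ and the invariance needed in your integral. This is exactly what the paper's citation of \cite[Lemma 1.28]{D} supplies, and your proof must make it explicit.

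A second soft spot is your preferred polynomial--approximation route: the estimate $\parallel p(U)\parallel\le C\sup_{\sigma(U)}|p|$ for hermitian $U$ is a $C^*$-algebra phenomenon and fails in general Banach algebras, where $\parallel f(U)\parallel$ is controlled only by stronger norms of $f$ (a Fourier--algebra type norm, not the sup norm on the spectrum). To repair it, approximate $z^{1/2}$ uniformly on a complex neighbourhood of $\sigma(U)$ (possible by Runge's theorem, again because $\mathbb C\setminus\sigma(U)$ is connected) and let the holomorphic functional calculus transfer uniform convergence on that neighbourhood to operator--norm convergence; hermiticity is not needed for that step. Finally, note that the paper obtains positivity of $U'$ without any spectral inclusion, via the numerical--range formula of \cite[Lemma 5, Chapter I, Section 10]{BD}, which shows that the numerical range of $U'$ is contained in that of $U$ directly from $\parallel z-U'\parallel\le\parallel z-U\parallel$; your hermiticity--plus--spectrum argument is acceptable once the inclusion $\sigma(U')\subseteq\sigma(U)$ has been justified as above.
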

\begin{proof} Clearly, $U^{-1}(Y)=Y$ and $U'\in L(Y)$ is invertible. On the other hand, according to
\cite[Lemma 5, Chapter I, Section 10]{BD}, if $S\in A=L(W)$, $W$ a Banach space, then
$V(S)=\cap_{z\in\mathbb C} B[z, \parallel z-S\parallel]$,
where if $z\in \mathbb C$ and $r\in \mathbb R_+$, then $B[z, r]=\{z'\colon \mid z-z'\mid\le r\}$. Since
$$
V(U')=\cap_{z\in\mathbb C} B[z, \parallel z-U'\parallel]\subseteq \cap_{z\in\mathbb C} B[z, \parallel z-U\parallel]
=V(U)\subset  \mathbb R_+,
$$
$U'\in L(Y)$ is positive.\par \indent To prove that
$(U')^{1/2}=(U^{1/2})'\in L(Y)$, consider $K=\sigma (U)\cup\sigma
(U')$. Since $U\in L(X)$ and $U'\in L(Y)$ are invertible and
positive, $K\subset G= \mathbb C\setminus \{x\in\mathbb R\colon
x\le 0\}$. Let $f\colon G\to \mathbb C$ be the principal branch of
the square root function in $G$, i.e., $f(z)=z^{1/2}$ ($z\in G$).
Now well, if $\lambda\in G\setminus K$, then according to
\cite[Lemma 1.28]{D}, $(U-\lambda)^{-1}(Y)\subseteq Y$. As a
result, according to the formula of the Riesz Functional Calculus
for $f\colon G\to \mathbb C$, using in particular an appropriate
system of curves $\Gamma\subseteq G\setminus K$, it is not
difficult to prove that $f(U)(Y)\subseteq Y$. However, since
$f(U)^2=U$ and $\sigma (f(U))=f(\sigma (U))\subseteq \mathbb
R_+\setminus \{0\}$, according to  \cite[Theorem]{G},
$f(U)=U^{1/2}$; in particular, $U^{1/2}(Y)\subseteq Y$. Now well,
since $f(U')=f(U)\mid_Y=(U^{1/2})'$, $\sigma((U^{1/2})')=\sigma
(f(U'))=f(\sigma (U'))\subseteq R_+$. Therefore, since
$((U^{1/2})')^2=U'$, according again to \cite[Theorem]{G},
$(U')^{1/2}=(U^{1/2})'$.\par

\indent Concerning the last two facts, note that according to what has been proved and \cite[Lemma 1.28]{D},
$U^{-1/2}(Y)\subseteq Y$. However, a direct calculation,
using in particular  that $(U')^{1/2}=(U^{1/2})'$, proves that $(U')^{-1/2}=(U^{-1/2})'$.
\end{proof}

\begin{thm}\label{thm14} Let $X$ be a Banach space and consider $E$, $F$, $T\in L(X)$ such that
$E$ and $F$ are invertible and positive and $T^{\dag}_{E,F}$ exists. Suppose in addition that
there is $Y\subseteq X$ a closed and invariant subspace for $T$  and $T^{\dag}_{E,F}$ such that $E(Y)=Y=F(Y)$.
Then, the weighted Moore-Penrose inverse of $T'\in L(Y)$ with respect to the weights
$E'$, $F'\in L(Y)$ exists. What is more, $(T')^{\dag}_{E',F'}=(T^{\dag}_{E,F})'$.
\end{thm}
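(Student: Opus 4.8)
The plan is to write $S=T^{\dag}_{E,F}$ and to verify directly, against Definition \ref{def3}, that $S'=S\mid_Y$ is the weighted Moore--Penrose inverse of $T'=T\mid_Y$ with weights $E'=E\mid_Y$ and $F'=F\mid_Y$. The first ingredient is Proposition \ref{pro13}: since $E,F\in L(X)$ are invertible and positive and $E(Y)=Y=F(Y)$, the restrictions $E',F'\in L(Y)$ are invertible and positive, the operators $E^{\pm 1/2}$ and $F^{\pm 1/2}$ all leave $Y$ invariant, and $(E')^{\pm 1/2}=(E^{\pm 1/2})'$, $(F')^{\pm 1/2}=(F^{\pm 1/2})'$ in $L(Y)$. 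This reduces the task to two points: (a) $S'$ is a normalized generalized inverse of $T'$, and (b) $T'S'\in H(L(Y)^{E'})$ and $S'T'\in H(L(Y)^{F'})$.

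Point (a) is immediate: the identities $TST=T$ and $STS=S$ hold in $L(X)$ because $S=T^{\dag}_{E,F}$, and restricting them to the invariant subspace $Y$ gives $T'S'T'=T'$ and $S'T'S'=S'$. For point (b), the key observation is that $Y$ is invariant for $TS$ and for $ST$ (as it is invariant for both $T$ and $S$), hence for the entire functions $\exp(it\,TS)$ and $\exp(it\,ST)$, and that $M\mapsto M'=M\mid_Y$ is a contractive unital algebra homomorphism from the set of $Y$-preserving operators into $L(Y)$; in particular it commutes with composition and with the exponential series. Using $T'S'=(TS)'$ together with the square-root identities from Proposition \ref{pro13}, one then computes
$$
\|\exp(it\,T'S')\|_{E'}=\|(E^{1/2}\exp(it\,TS)E^{-1/2})'\|_{L(Y)}\le \|E^{1/2}\exp(it\,TS)E^{-1/2}\|_{L(X)}=\|\exp(it\,TS)\|_E=1
$$
for every $t\in\mathbb R$, and symmetrically $\|\exp(it\,S'T')\|_{F'}\le 1$ for all $t$. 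By \cite[Remark 2]{B1} these inequalities force $T'S'\in H(L(Y)^{E'})$ and $S'T'\in H(L(Y)^{F'})$, so $S'=(T')^{\dag}_{E',F'}$, which is exactly the formula $(T')^{\dag}_{E',F'}=(T^{\dag}_{E,F})'$.

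The only genuinely delicate part is the interaction of functional calculus with the invariant subspace --- namely that $E^{1/2},E^{-1/2}$ and their $F$-analogues again preserve $Y$ and restrict there to the square roots of $E',F'$ --- but this has already been carried out in Proposition \ref{pro13}, so in the present argument it may simply be quoted. Everything else rests on two elementary facts: restriction to an invariant subspace is a norm-nonincreasing algebra homomorphism that commutes with holomorphic functional calculus, and the numerical-range characterization of hermitian elements used throughout the paper in the form ``$\|\exp(ita)\|\le 1$ for all real $t$ implies $a$ hermitian'' (as in Lemmas \ref{lem9}, \ref{lem11} and Theorem \ref{thm12}). Thus I expect no real obstacle; the proof is a short verification once Proposition \ref{pro13} is in hand.
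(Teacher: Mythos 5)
Your proposal is correct and follows essentially the same route as the paper: quote Proposition \ref{pro13} for the restricted square roots, note that $(T^{\dag}_{E,F})'$ is a normalized generalized inverse of $T'$, bound $\parallel \exp(it\,T'S')\parallel_{E'}$ and $\parallel \exp(it\,S'T')\parallel_{F'}$ by the corresponding norms in $L(X)$ via the contractivity of restriction, and invoke \cite[Remark 2]{B1}. No discrepancies to report.
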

\begin{proof} It is clear that  $(T^{\dag}_{E,F})'$ is a normalized generalized inverse of $T'$. On
the other hand, according to Proposition \ref{pro13},

\begin{align*}
\parallel exp(it T'(T^{\dag}_{E,F})')\parallel_{E'}&=\parallel (E')^{1/2}exp(it T'(T^{\dag}_{E,F})')(E')^{-1/2}\parallel
\le \parallel E^{1/2}exp(it TT^{\dag}_{E,F})E^{-1/2}\parallel\\
&=\parallel exp(it TT^{\dag}_{E,F})\parallel_E=1,\\
\parallel exp(it (T^{\dag}_{E,F})'T')\parallel_{F'}&=\parallel (F')^{1/2}exp(it (T^{\dag}_{E,F})'T')(F')^{-1/2}\parallel
\le \parallel F^{1/2}exp(it T^{\dag}_{E,F}T)F^{-1/2}\parallel\\
&=\parallel exp(it T^{\dag}_{E,F}T)\parallel_F=1.\\
\end{align*}
\indent Therefore, according to \cite[Remark 2]{B1}, the proof is complete.
\end{proof}

\indent The weighted Moore-Penrose inverse in quotient spaces will be studied now. However, first
some preliminary facts must be recalled. \par

\begin{rema}\label{rem15}\rm Let $X$ be a Banach space and consider
$U\in L(X)$ an invertible and positive operator. Suppose that $Y\subseteq X$
is a closed invariant subspace for $U$ and denote by $\Pi \colon X\to X/Y$ the
quotient map. Then, if $\tilde{U}\in L(X/Y)$ is the quotient operator induced by
$U$, $\tilde{U}$ is invertible and positive. In fact, it is clear that  $\tilde{U}\in L(X/Y)$
is invertible. In addition, according to \cite[Theorem 4.12(ii)]{D},  $\tilde{U}$ is hermitian,
and since $\sigma (\tilde{U})\subseteq \sigma (U)\subset\mathbb R_+$, $\tilde{U}$
is positive (\cite[Definition 5, Chapter V, Section 38]{BD}).\par
\indent Moreover, according to \cite[Theorem]{G}, $(\tilde{U})^{1/2}=\tilde{U^{1/2}}$
and $(\tilde{U})^{-1/2}=\tilde{U^{-1/2}}$.
\end{rema}
\markright{\hskip4truecm WEIGHTED MOORE-PENROSE INVERSE } 
\begin{thm}\label{thm16} Let $X$ be a Banach space and consider $E$, $F\in L(X)$
two invertible and positive operators. Let $T\in L(X)$ such that $T^{\dag}_{E,F}$ exists.
Suppose in addition that $Y\subseteq X$ is a closed and invariant subspace for
$T$, $T^{\dag}_{E,F}$, $E$ and $F$. Then $\tilde{T}\in L(X/Y)$ is weighted
Moore-Penrose invertible with weights $\tilde{E}$ and $\tilde{F}$.
Furthermore, $(\tilde{T})^{\dag}_{\tilde{E}, \tilde{F}}=\tilde{T^{\dag}_{E,F}}$.
\begin{proof} It is clear that $\tilde{T^{\dag}_{E, F}}$
is a normalized generalized inverse of $\tilde{T}$. On the other hand,

\begin{align*}
\parallel exp(it \tilde{T}\tilde{T^{\dag}_{E, F}} )\parallel_{\tilde{E}}&=
\parallel \tilde{E}^{1/2} exp(it \tilde{T}\tilde{T^{\dag}_{E, F}}) \tilde{E}^{-1/2}\parallel
=\parallel \Pi(E^{1/2} exp(it TT^{\dag}_{E, F}) E^{-1/2})\parallel\\
&\le\parallel E^{1/2} exp(it TT^{\dag}_{E, F}) E^{-1/2}\parallel=\parallel exp(it TT^{\dag}_{E, F}) \parallel_E=1,\\
\parallel exp(it\tilde{T^{\dag}_{E, F}}  \tilde{T})\parallel_{\tilde{F}}&=
\parallel \tilde{F}^{1/2} exp(it \tilde{T^{\dag}_{E, F}} \tilde{T}) \tilde{F}^{-1/2}\parallel
=\parallel \Pi(F^{1/2} exp(it T^{\dag}_{E, F}T) F^{-1/2})\parallel\\
&\le\parallel F^{1/2} exp(it T^{\dag}_{E, F}T) F^{-1/2}\parallel=\parallel exp(it T^{\dag}_{E, F}T) \parallel_F=1.\\
\end{align*}
Consequently, according to \cite[Remark 2]{B1}, $(\tilde{T})^{\dag}_{\tilde{E}, \tilde{F}}=\tilde{T^{\dag}_{E,F}}$.
\end{proof}
\end{thm}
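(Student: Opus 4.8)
\section*{Proof proposal}

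The plan is to follow the same template as the proofs of Theorems \ref{thm12} and \ref{thm14}: push the four identities that characterise the weighted Moore-Penrose inverse through the quotient map, and control the weighted norms on $L(X/Y)$ by the contractivity of the induced homomorphism $S\mapsto\tilde S$ together with Remark \ref{rem15}. First I would observe that, since $Y$ is invariant for both $T$ and $T^{\dag}_{E,F}$, the operators $\tilde T$ and $\tilde{T^{\dag}_{E,F}}$ are well defined on $X/Y$, and that $S\mapsto\tilde S$, restricted to the subalgebra of $L(X)$ of operators leaving $Y$ invariant, is a unital algebra homomorphism with $\| \tilde S\|\le\| S\|$. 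Applying this homomorphism to $TT^{\dag}_{E,F}T=T$ and $T^{\dag}_{E,F}TT^{\dag}_{E,F}=T^{\dag}_{E,F}$ at once shows that $\tilde{T^{\dag}_{E,F}}$ is a normalized generalized inverse of $\tilde T$. Next, because $E$ and $F$ are invertible, positive and leave $Y$ invariant, Remark \ref{rem15} provides that $\tilde E$ and $\tilde F$ are invertible and positive in $L(X/Y)$ and, what is essential for the estimate, that $\tilde E^{\pm 1/2}=\tilde{E^{\pm 1/2}}$ and $\tilde F^{\pm 1/2}=\tilde{F^{\pm 1/2}}$.

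The core step is then to verify that $\tilde T\,\tilde{T^{\dag}_{E,F}}\in H(L(X/Y)^{\tilde E})$ and $\tilde{T^{\dag}_{E,F}}\,\tilde T\in H(L(X/Y)^{\tilde F})$. For the first I would compute, for $t\in\mathbb R$,
\[
\| exp(it\,\tilde T\,\tilde{T^{\dag}_{E,F}})\|_{\tilde E}
=\|\widetilde{E^{1/2}exp(it\,TT^{\dag}_{E,F})E^{-1/2}}\|
\le\| exp(it\,TT^{\dag}_{E,F})\|_E=1,
\]
where the first equality uses the square-root identities of Remark \ref{rem15} and the fact that $S\mapsto\tilde S$ is a continuous homomorphism (hence commutes with products and with the exponential series), the inequality uses $\| \tilde S\|\le\| S\|$, and the last equality holds because $TT^{\dag}_{E,F}\in H(L(X)^E)$. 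Since this holds for every real $t$, \cite[Remark 2]{B1} (applied to $\pm$ the element) forces $\tilde T\,\tilde{T^{\dag}_{E,F}}$ to be hermitian in $L(X/Y)^{\tilde E}$. The symmetric computation, with $F$ in place of $E$ and $\tilde{T^{\dag}_{E,F}}\,\tilde T$ in place of $\tilde T\,\tilde{T^{\dag}_{E,F}}$, handles the other product. At that point $\tilde{T^{\dag}_{E,F}}$ satisfies every clause of Definition \ref{def3} for $\tilde T$ with weights $\tilde E,\tilde F$, and by Proposition \ref{prop4} it is the unique such element, i.e.\ $(\tilde T)^{\dag}_{\tilde E,\tilde F}=\tilde{T^{\dag}_{E,F}}$.

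I expect the only real obstacle to be the bookkeeping with square roots: one must be certain that the positive square root of $\tilde E$ formed inside $L(X/Y)$ is the image under the quotient of the positive square root of $E$ formed inside $L(X)$, and likewise for $E^{-1/2}$ and for $F^{\pm 1/2}$; otherwise the two weighted norms would not match after passing to the quotient. This is precisely what Remark \ref{rem15} supplies, via uniqueness of positive square roots (\cite[Theorem]{G}). Once that is granted, the argument reduces to the routine transport of identities through $S\mapsto\tilde S$ and the contractivity of the quotient homomorphism, with no further difficulty; in particular, the well-definedness of $\tilde T$ and $\tilde{T^{\dag}_{E,F}}$, multiplicativity, and the exponential-series manipulation are all standard.
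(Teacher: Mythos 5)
Your proposal is correct and follows essentially the same route as the paper's proof: verify that $\tilde{T^{\dag}_{E,F}}$ is a normalized generalized inverse of $\tilde{T}$ by pushing the defining identities through the quotient homomorphism, use Remark \ref{rem15} to identify $\tilde{E}^{\pm 1/2}$ and $\tilde{F}^{\pm 1/2}$ with the quotients of $E^{\pm 1/2}$ and $F^{\pm 1/2}$, bound $\parallel exp(it\,\tilde{T}\tilde{T^{\dag}_{E,F}})\parallel_{\tilde{E}}$ and $\parallel exp(it\,\tilde{T^{\dag}_{E,F}}\tilde{T})\parallel_{\tilde{F}}$ by $1$ via contractivity of the quotient map, and conclude with \cite[Remark 2]{B1}. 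Your additional remarks on well-definedness and uniqueness only make explicit what the paper leaves implicit.
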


\section{\sfstp Weighted EP elements}\setcounter{df}{0}
\
  \markboth{  }{ \hskip1.5truecm \rm ENRICO BOASSO, DRAGAN S. DJORDJEVI\'C AND  DIJANA  MOSI\'C }
In this section, weighted EP Banach space operators and Banach
algebra elements will be considered. In particular,  these elements will be characterized extending results for matrices
(\cite{TW}) and elements of $C^*$-algebras (\cite{MD2}). In first
place, the main notion of this section will be introduced.\par

\begin{df}\label{def17}
Given a unital Banach algebra $A$ and $e$, $f\in A$ two invertible and
positive elements,  $a\in A$ is said to be weighted EP with weights $e$ and $f$,
if $a^{\dag}_{e,f}$ exists and commutes with $a$.
\end{df}

\indent Under the same conditions of Definition \ref{def17} and as it was
pointed out in the paragraph that follows Definition \ref{def3},  note that if $e=f$, then
necessary and sufficient for $a\in A$ to be weighted EP with weight $e$ is that $a\in A^e$
is EP; EP Banach algebra elements were studied in \cite{B1, B3, B2R, MD1}.
In the context of $C^*$-algebras, see \cite{Ko2}.\par

\indent  To study the objects introduced in Definition \ref{def17},  the notion of group inverse need to be
recalled. In fact, weighted EP Banach algebra elements consists in a particular class of group invertible
elements. \par

\begin{df}\label{df18}Given a unital Banach algebra $A$ and $a\in A$, an element $b\in A$ will be said
to be the group inverse of $a$, if the following set of equations is satisfied:\par
$$
a=aba, \hskip.8cm b=bab, \hskip.8cm ab=ba.
$$
\end{df}
\indent Under the conditions of Definition \ref{df18}, note that according to \cite[Theorem 9]{HM1}, if the group inverse of $a\in A$ exists, then
it is unique. In this case, the group inverse of $a\in A$ will be denoted by $a^{\sharp}$.  In the following remark some of the most relevant properties of the group inverse will be
recalled.\par

\begin{rema}\label{rema19}\rm  (i)  Let $A$ be a unital Banach algebra and consider $a\in A$.
Suppose that $b\in A$ is a normalized generalized inverse of $a$. Then, necessary and sufficient for $b$ to be the group inverse of $a$ is that
$L_b\in L(A)$ (respectively $R_b\in L(A)$) is the group inverse of $L_a\in L(A)$ (respectively $R_a\in L(A)$). In fact, since $L_b$ is a normalized generalized inverse of $L_a$,
according to Definition \ref{df18} the statement under consideration is equivalent to prove that $a$ and $b$ commute
if and only if $L_a$ and $L_b$ commute, which is clear. A similar argument proves the statement for the right multiplication operator on $L(A)$.
In addition, note that in this case, according to  \cite[Theorem 9]{HM1}, $(L_a)^{\sharp} =L_{a^\sharp}$ (respectively $(R_a)^{\sharp} =R_{a^\sharp}$).\par

\noindent (ii) Suppose that $e$, $f\in A$ are invertible and positive ($A$ as in (i)).
Note that if $a\in A$ is group invertible,
then necessary and sufficient for $a$ to be weighted EP with weights $e$ and $f$ is that $a a^{\sharp}\in A^e$
and $a^{\sharp}a\in A^f$ are hermitian. In fact, if $a\in A$ is weighted EP with weights $e$ and $f$, then according to
\cite[Theorem 9]{HM1}, $a^{\sharp}$ exists, actually $a^{\sharp}=  a^{\dag}_{e,f}$.
In particular, $a a^{\sharp}\in A^e$
and $a^{\sharp}a\in A^f$ are hermitian.
On the other hand, if $a a^{\sharp}\in A^e$
and $a^{\sharp}a\in A^f$ are hermitian, then according to Definition \ref{def3}, $a^{\dag}_{e,f}$ exists.
What is more, according to  Proposition \ref{prop4},
$a^{\sharp}=a^{\dag}_{e,f}$. Since
$a a^{\dag}_{e,f}=aa^{\sharp}= a^{\sharp}a=a^{\dag}_{e,f}a$,
$a$ is weighted EP with weights $e$ and $f$. Consequently,
weighted EP elements are group invertible elements for which the weighted Moore-Penrose inverse exists and coincides
with the group inverse. \par

\noindent (iii) Let $A=L(X)$, $X$ a Banach space, and consider $T\in L(X)$. Then, according to \cite[Lemma 1]{Ki},
necessary and sufficient for $T^{\sharp}$ to exists is that $X=N(T)\oplus R(T)$.

\noindent (iv) Under the conditions of (iii), note that if $T\in L(X)$ is group invertible,
then, as in Remark \ref{rema5}(b),
 it is not difficult to prove that $N(T)=N(T^{\sharp}T)=N(TT^{\sharp})=N(T^{\sharp})$ and $R(T)=R(TT^{\sharp})=R(T^{\sharp}T)=R(T^{\sharp})$.\par

\noindent (v) Observe that according to items (iii) and (iv), $R(T^k)=R((T^{\sharp})^k)=R(T)$ and $N(T^k)=N((T^{\sharp})^k)=N(T)$. \end{rema}

\indent In the following theorem the first characterization of weighted EP elements will be given.\par

\begin{thm}(a)  Let $X$ be a Banach space and consider $E$, $F\in
L(X)$ two invertible and positive operators. Then, if $T\in L(X)$,
the following statements are equivalent.\par
 \noindent  (i)
\hskip.18cm $T$ is weighted  EP with weights  $E$ and $F$;\par
\noindent (ii) there exists an idempotent $P\in L(X)$ such that $P\in
H(L(X)^E)\cap H(L(X)^F)$, $R(P)=R(T)$ and $N(P)=N(T)$.\par
\noindent (b)  Let $A$ be a unital Banach algebra and consider $e$, $f\in
A$ two invertible and positive elements. Then, if $a\in A$ is such that $a^{\dag}_{e,f}$ exists,
the following statements are equivalent.\par
 \noindent  (iii) $a$ is weighted  EP with weights  $e$ and $f$;\par
\noindent (iv) there exists an idempotent $P\in L(A)$ such that $P\in
H(L(A)^{L_e})\cap H(L(A)^{L_f})$, $R(P)=aA$ and $N(P)=a^{-1}(0)$,\par
\noindent (v) $L_a\in L(A)$ is weighted
EP with weights $L_e$ and $L_f$.\par
\noindent Furthermore, if in (a) or (b) such $P$ exists, then it is unique.\par
 
\end{thm}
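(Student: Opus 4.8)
The plan is to prove (a) first as a self-contained statement about Banach space operators, then deduce (b) by applying (a) to the operator $L_a\in L(A)$ together with the reduction machinery already available (Theorem \ref{thm8} and Remark \ref{rema19}(i)–(ii)). For part (a), the implication (i)$\Rightarrow$(ii) is the easy direction: if $T$ is weighted EP with weights $E$ and $F$, then by Remark \ref{rema19}(ii) the group inverse $T^{\sharp}$ exists and equals $T^{\dag}_{E,F}$, and $T$ commutes with it. Set $P=TT^{\dag}_{E,F}=T^{\dag}_{E,F}T$; this single idempotent is simultaneously $TT^{\dag}_{E,F}\in H(L(X)^E)$ and $T^{\dag}_{E,F}T\in H(L(X)^F)$, so $P\in H(L(X)^E)\cap H(L(X)^F)$, and by Remark \ref{rema5}(b) (items (v)–(vi)) we get $R(P)=R(TT^{\dag}_{E,F})=R(T)$ and $N(P)=N(T^{\dag}_{E,F}T)=N(T)$.

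For (ii)$\Rightarrow$(i), I would essentially rerun the construction in the proof of Theorem \ref{thm7}, but now with a single $P$ playing the role of both $P$ and $Q$. Given the idempotent $P$ with $R(P)=R(T)$ and $N(P)=N(T)$, the restriction $T'=T\mid_{R(P)}^{R(P)}\colon R(P)\to R(P)$ is invertible (injective since $N(T)=N(P)$ meets $R(P)$ trivially, surjective since $R(T)=R(P)$); define $S$ by $S\mid_{N(P)}\equiv 0$ and $S\mid_{R(P)}=(T')^{-1}$. Then $S$ is a normalized generalized inverse of $T$, and since $TS$ and $P$ are idempotents with $R(TS)=R(T)=R(P)$ and $N(TS)=N(S)=N(P)$, one gets $TS=P$; symmetrically $ST=P$ (here $X=R(P)\oplus N(P)$ makes both computations transparent). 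Hence $TS=ST=P\in H(L(X)^E)\cap H(L(X)^F)$, so $S=T^{\dag}_{E,F}$ by Definition \ref{def3}, and $TT^{\dag}_{E,F}=T^{\dag}_{E,F}T$ shows $T$ commutes with its weighted Moore-Penrose inverse, i.e. $T$ is weighted EP. Uniqueness of $P$ follows exactly as in Theorem \ref{thm7}: any such $P'$ satisfies $R(P')=R(P)$ and $N(P')=N(P)$, so $R(I-P')=R(I-P)$, and then \cite[Hilfssatz 2(a)-(b)]{V} together with \cite[Theorem 2.2]{P} forces $P'=P$.

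For part (b), the clean route is through $L_a$. By Theorem \ref{thm8}, $a^{\dag}_{e,f}$ exists iff $L_{a^{\dag}_{e,f}}=(L_a)^{\dag}_{L_e,L_f}$ exists, and by Remark \ref{rema19}(i) the group inverse of $a$ exists iff that of $L_a$ does, with $(L_a)^{\sharp}=L_{a^{\sharp}}$; moreover $ab=ba$ iff $L_aL_b=L_bL_a$. Combining these, $a$ is weighted EP with weights $e,f$ iff $L_a$ is weighted EP with weights $L_e,L_f$, which is (iii)$\Leftrightarrow$(v); note $L_e,L_f$ are invertible and positive by Lemma \ref{lem9}. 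Then (v)$\Leftrightarrow$(iv) is just part (a) applied to $T=L_a\in L(A)$, once one identifies $R(L_a)=aA$ and $N(L_a)=a^{-1}(0)$ as in the notational conventions of section 2. The uniqueness clause in (b) is inherited from the uniqueness clause in (a). The only point requiring a little care — and the step I expect to be the mildest obstacle — is making sure the two hermiticity conditions (one in $L(X)^E$, one in $L(X)^F$) can genuinely be realized by the \emph{same} idempotent, which is exactly where the EP hypothesis $TT^{\dag}_{E,F}=T^{\dag}_{E,F}T$ is used, and dually in (ii)$\Rightarrow$(i) where $X=R(P)\oplus N(P)$ is what forces $TS=ST$; everything else is a routine transcription of the arguments already given for Theorems \ref{thm7} and \ref{thm8}.
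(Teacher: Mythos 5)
Your proposal is correct and follows essentially the same route as the paper: the forward directions and all of part (b) (reduction to $L_a$ via Theorem \ref{thm8}, Lemma \ref{lem9} and Remark \ref{rema19}(i)) coincide with the paper's argument. The only cosmetic difference is in (ii)$\Rightarrow$(i) of part (a), where the paper simply invokes Theorem \ref{thm7} for the existence of $T^{\dag}_{E,F}$ and then identifies $TT^{\dag}_{E,F}=P=T^{\dag}_{E,F}T$ via the uniqueness of hermitian idempotents (\cite[Hilfssatz 2(a)-(b)]{V}, \cite[Theorem 2.2]{P}), whereas you re-run the explicit construction of $S$ from the proof of that theorem with $Q=P$; both arguments rest on the same underlying facts.
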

\begin{proof} (a) If $T$ is weighted  EP with weights  $E$ and $F$,
then $P=TT^{\dag}_{E,F}=T^{\dag}_{E,F}T$ satisfies the required
property (Remark \ref{rema5}(b)).\par

\indent On the other hand, if statement (ii) holds, then according to Theorem \ref{thm7}, $T^{\dag}_{E,F}$ exists.
Further, observe that $R(TT^{\dag}_{E,F})=R(T)=R(P)$ and
$R(I-T^{\dag}_{E,F}T)=N(T^{\dag}_{E,F}T)=N(T)=N(P)=R(I-P)$. Therefore, according to
\cite[Hilfssatz 2(a)-(b)]{V} and \cite[Theorem 2.2]{P},
$T^{\dag}_{E,F} T=P=TT^{\dag}_{E,F}$.\par

\noindent (b) According to what has been proved, statements (iv) and (v) are equivalent.
 In addition, since according to Theorem \ref{thm8}, $(L_a)_{L_e,L_f}^{\dag}=L_{a_{e,f}^{\dag}}$,
statements (iii) and (v) are equivalent.\par  

\indent The last statement is a consequence of Theorem \ref{thm7}.
\end{proof}

\indent Next some basic characterizations of weighted EP Banach
space operators will be given.\par
\markright{\hskip4truecm WEIGHTED MOORE-PENROSE INVERSE } 
\begin{thm}\label{thm20}
Let $X$ be a Banach space and consider $E$, $F\in L(X)$ two invertible
and positive operators. Suppose in addition that $T\in L(X)$ is such that $T^\dag_{E,F}$ and
$T^{\sharp}$ exist. Then, the following statements are equivalent.
\begin{itemize}

\item[\rm (i)] $T$ is weighted EP with weights  $E$ and $F$;

\item[\rm(ii)] $R(T^\dag_{E,F})=R(T)$ and $N(T^\dag_{E,F})=N(T)$;

\item[\rm(iii)] $R(T^\dag_{E,F})\subset R(T)$ and $N(T)\subset
N(T^\dag_{E,F})$;

\item[\rm(iv)] $R(T)\subset R(T^\dag_{E,F})$ and $N(T)\subset
N(T^\dag_{E,F})$;

\item[\rm(v)] $R(T^\dag_{E,F})\subset R(T)$ and
$N(T^\dag_{E,F})\subset N(T)$;

\item[\rm(vi)] $R(T)\subset R(T^\dag_{E,F})$ and
$N(T^\dag_{E,F})\subset N(T)$;

\item[\rm (vii)] $T^\dag_{E,F}=T^{\sharp}$;

\item[\rm (viii)] $T^\dag_{E,F}=T(T^\dag_{E,F})^2=(T^\dag_{E,F})^2T$;

\item[\rm (ix)] $T=T^\dag_{E,F}T^2=T^2T^\dag_{E,F}$;

\item[\rm (x)] $T^\dag_{E,F}$ is weighted EP with weights  {\rm$F$} and {\rm $E$}, moreover
$(T^\dag_{E,F})^{\dag}_{F,E}=T$;

\item[\rm (xi)]  $T$ is weighted  EP with weights $F$ and $E$;

\item[\rm (xii)]  $T$ is both weighted EP with weights  $E$ and $E$
and with weights  $F$ and $F$;

\item[\rm (xiii)] $T^k$ is weighted EP with weights $E$ and $F$,
for some integer $k\geq 1$;

\item[\rm (xiv)] $T^{\sharp}$ is weighted EP with weights $E$ and
$F$;

\item[\rm (xv)] $TT^{\sharp}=TT^\dagger_{E,E}=TT^\dagger_{F,F}$
{\rm (}or $TT^{\sharp}=T^\dagger_{E,E}T=T^\dagger_{F,F}T${\rm )};

\item[\rm (xvi)] $TT^{\sharp}=TT^\dagger_{E,F}=TT^\dagger_{F,E}$
{\rm (}or $TT^{\sharp}=T^\dagger_{F,E}T=T^\dagger_{E,F}T${\rm )}.

\end{itemize}
\end{thm}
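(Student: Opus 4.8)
The plan is to route everything through the two idempotents $P_0=TT^\dag_{E,F}$ and $Q_0=T^\dag_{E,F}T$. Since $T^\dag_{E,F}$ is a normalized generalized inverse of $T$, both are idempotent, with $R(P_0)=R(T)$, $N(P_0)=N(T^\dag_{E,F})$, $R(Q_0)=R(T^\dag_{E,F})$, $N(Q_0)=N(T)$, and Remark \ref{rema5}(b) supplies the decompositions $X=R(T^\dag_{E,F})\oplus N(T)=R(T)\oplus N(T^\dag_{E,F})$, while $X=R(T)\oplus N(T)$ because $T^\sharp$ exists (Remark \ref{rema19}(iii)). The key observation is that (i) holds if and only if $P_0=Q_0$. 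Since two idempotents agree exactly when they have the same range and the same null space, this last condition is precisely (ii), so (i)$\Leftrightarrow$(ii), and (ii) trivially yields (iii)--(vi). To close these I would translate the range/null space inclusions into algebraic identities: $R(Q_0)\subseteq R(P_0)\Leftrightarrow P_0Q_0=Q_0$, $N(Q_0)\subseteq N(P_0)\Leftrightarrow P_0Q_0=P_0$, and the symmetric statements with $Q_0P_0$. For (iii) and (vi) the two inclusions then force $P_0=Q_0$ immediately. For (iv) and (v) this is not enough, and there I would invoke the elementary fact that closed subspaces $M_1\subseteq M_2$ with $X=M_1\oplus N=M_2\oplus N$ must coincide (and its null space analogue): comparing the one-sided inclusions of (iv), (v) against the three decompositions above upgrades them to the equalities of (ii).

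For (vii)--(ix) I would argue with the group inverse. A normalized generalized inverse that commutes with $T$ equals $T^\sharp$ by uniqueness of the group inverse, which gives (i)$\Leftrightarrow$(vii) at once. Writing $S=T^\dag_{E,F}$ and using only $TST=T$ and $STS=S$: (vii) yields $S=TS^2=S^2T$ and $T=ST^2=T^2S$; conversely $S=TS^2=S^2T$ forces $TS=TS^2T=ST$, so (viii)$\Rightarrow$(vii), while $T=ST^2=T^2S$ forces $R(T)\subseteq R(S)$ and $N(S)\subseteq N(T)$, which is (vi), so (ix)$\Rightarrow$(vi)$\Rightarrow$(i). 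Condition (x) is a disguised form of (i): by Remark \ref{rema5}(a), $T^\dag_{E,F}$ always admits the weighted Moore--Penrose inverse $T$ with weights $F$ and $E$, so ``$T^\dag_{E,F}$ is weighted EP with weights $F$, $E$'' unwinds to ``$T$ commutes with $T^\dag_{E,F}$''.

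Conditions (xi)--(xvi) I would deduce from the preceding theorem, which characterizes ``$T$ is weighted EP with weights $E$ and $F$'' by the existence of an idempotent $P$ with $P\in H(L(X)^E)\cap H(L(X)^F)$, $R(P)=R(T)$ and $N(P)=N(T)$. This condition is symmetric in the two weights, hence (i)$\Leftrightarrow$(xi); specializing to equal weights and using that an idempotent is determined by its range and null space gives (i)$\Leftrightarrow$(xii); and since $R(T^k)=R(T^\sharp)=R(T)$ and $N(T^k)=N(T^\sharp)=N(T)$ by Remark \ref{rema19}(iv)--(v), the same characterizing condition describes ``$T^k$ is weighted EP'' and ``$T^\sharp$ is weighted EP'', yielding (i)$\Leftrightarrow$(xiii)$\Leftrightarrow$(xiv). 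Finally, when (i) holds the idempotent $P$ above must equal $TT^\sharp=T^\sharp T$, and $P\in H(L(X)^E)$ forces $T$ to be weighted EP with weights $E$, $E$, so $T^\dag_{E,E}$ exists and equals $T^\sharp$; running this over the weight pairs occurring in (xv) and (xvi) produces all the stated equalities. Conversely, each such equality exhibits the idempotent $TT^\sharp$ (whose range is $R(T)$ and null space $N(T)$, by Remark \ref{rema19}(iv)) simultaneously as a hermitian idempotent of $L(X)^E$ and of $L(X)^F$ --- which algebra each one-sided product belongs to is read off Definition \ref{def3}, namely $TT^\dag_{g,h}\in H(L(X)^g)$ and $T^\dag_{g,h}T\in H(L(X)^h)$ --- so the preceding theorem returns (i). The only genuinely non-formal ingredient is the closed-complement uniqueness fact used for (iv) and (v); the rest is bookkeeping, the most delicate point being to keep track in (xv)--(xvi) of which one-sided product lands in which of the two hermitian algebras.
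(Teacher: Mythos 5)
Your proposal is correct and follows essentially the same route as the paper: the paper likewise derives (ii)--(vi) from the decompositions $X=R(T^\dag_{E,F})\oplus N(T)=R(T)\oplus N(T^\dag_{E,F})=R(T)\oplus N(T)$ and the fact that idempotents are determined by range and null space, handles (vii)--(x) by uniqueness of the group inverse and the same algebraic manipulations, and treats (xi)--(xvi) via the hermitian idempotent $TT^{\sharp}$ lying in $H(L(X)^E)\cap H(L(X)^F)$ together with $R(T^k)=R(T^{\sharp})=R(T)$ and $N(T^k)=N(T^{\sharp})=N(T)$. The only differences are organizational (e.g.\ you prove (viii)$\Rightarrow$(vii) directly instead of (viii)$\Rightarrow$(iii), and you invoke the idempotent-characterization theorem where the paper cites Remark \ref{rema19}(ii)), and you usefully make explicit the complement-uniqueness lemma the paper leaves as ``not difficult to prove.''
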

 \markboth{  }{ \hskip1.5truecm \rm ENRICO BOASSO, DRAGAN S. DJORDJEVI\'C AND  DIJANA  MOSI\'C }
\begin{proof} If  statement (i) holds, then according to Remark \ref{rema5}(b),
$R(T^\dag_{E,F})=R(T^\dag_{E,F}T)=R(TT^\dag_{E,F})=R(T)$
and  $N(T^\dag_{E,F})=  N(TT^\dag_{E,F})= N(T^\dag_{E,F}T)= N(T)$.
Consequently, statement (ii) holds, which in turn clearly implies
statements (iii)-(vi).\par

\indent On the other hand, if one of the statements (iii)-(vi) holds,
say statement (iii) for example, then, using the decompositions
of $X$
$$
X=R(T_{E,F}^{\dag})\oplus N(T)=R(T)\oplus N(T_{E,F}^{\dag})=R(T)\oplus N(T)
$$

\noindent (Remark \ref{rema5}(b) and Remark \ref{rema19}(iii)),
 it is not difficult to prove that $R(T^\dag_{E,F})=R(T)$ and $N(T^\dag_{E,F})=N(T)$,
equivalently statement (ii) holds. However, in this case $R(T^\dag_{E,F}T)=R(TT^\dag_{E,F})$ and $N(T^\dag_{E,F}T)=N(TT^\dag_{E,F})$.
Since $T^\dag_{E,F}T$ and $TT^\dag_{E,F}\in L(X)$ are idempotents,  $T^\dag_{E,F}T=TT^\dag_{E,F}$, i.e., $T$ is
weighted EP with weights $E$ and $F$.\par

\indent An argument similar to the one considered in Remark  \ref{rema19}(ii) proves the equivalence between statements (i) and (vii).\par

\indent Statement (i) implies statements (viii)-(ix). On the other hand, statement (viii) implies statement (iii)
and statement (ix) implies statement (vi).

\indent The equivalence between statements (i) and (x) is a consequence of Definition \ref{def3}.

\indent Since according to Remark \ref{rema19}(ii), $T$ is weighted EP with
weights $F$ and $E$ if and only if $TT^{\sharp}\in H(L(X)^F)\cap
H(L(X)^E)$, statements (i) and (xi) are equivalent. Similarly,
it is possible to prove that statement (i) is equivalent to statements (xii)-(xiv)
using that $T^k(T^k)^{\sharp}=T^k(T^{\sharp})^k=TT^{\sharp}$ (for
some integer $k\geq 1$) and $(T^{\sharp})^{\sharp}=T^{\sharp}$.\par

\indent If statement (xii) holds, then
$T^{\sharp}=T^\dagger_{E,E}=T^\dagger_{F,F}$ and statement (xv) is
satisfied, which in turn implies statement (i) (Remark \ref{rema19}(ii)). In a similar way it is possible to prove that statement (xvi)
implies statement (i) and statement (xi) implies statement (xvi).
\end{proof}

\indent In the following theorem more characterizations concerning weighted EP operators will be proved.\par

\begin{thm}\label{thm21}
Let $X$ be a Banach space and consider $E,F\in L(X)$ two
invertible and positive operators. Suppose in addition that $T\in
L(X)$ is such that $T^\dag_{E,F}$ and $T^{\sharp}$ exist. Then, necessary and sufficient
for $T$ to be weighted EP with weights $E$ and $F$ is that one of the following statements holds.
\markright{\hskip4truecm WEIGHTED MOORE-PENROSE INVERSE } 
\begin{align*}
&{\rm (i)}& &TT^{\sharp}T^{\dag}_{E,F}=T^{\dag}_{E,F}T^{\sharp}T;&\\
 &{\rm (ii)}& & TT^{\dag}_{E,F}T^{\dag}_{E,F}T=T^{\dag}_{E,F}TTT^{\dag}_{E,F}; &\\
 &{\rm (iii)}&& T^{\dag}_{E,F}T^{\sharp}T+TT^{\sharp}T^{\dag}_{E,F}=2T^{\dag}_{E,F};&\\
 &{\rm (iv)}&& (T^{\dag}_{E,F})^2T^{\sharp}=T^{\dag}_{E,F}T^{\sharp}T^{\dag}_{E,F}=T^{\sharp}(T^{\dag}_{E,F})^2;&\\
 &{\rm (v)}&&T(T^{\dag}_{E,F})^2=T^{\sharp}=(T^{\dag}_{E,F})^2T;&\\
 &{\rm (vi)}& &T^k=T^{\dag}_{E,F}TT^k=T^kTT^{\dag}_{E,F};&\\
&{\rm (vii)}& &(T^{\dag}_{E,F})^k=(T^{\sharp})^k;&\\
    &{\rm (viii)}& &(T^{\sharp})^k T^{\dag}_{E,F}=T^{\dag}_{E,F}(T^{\sharp})^k ;&\\
 &{\rm (ix)}& &T^k T^{\dag}_{E,F}=T^{\dag}_{E,F}T^k ;& \\
&{\rm (x)}& &T^{2k-1}= T^{\dag}_{E,F} T^{2k+1}T^{\dag}_{E,F};&\\
&{\rm (xi})& &(T^{\sharp})^kT^{\dag}_{E,F}T=(T^{\dag}_{E,F})^k;&.\\
 &{\rm (xii)}& &T(T^{\dag}_{E,F})^{k+1}=(T^{\sharp})^k=(T^{\dag}_{E,F})^{k+1}T;&\\
&{\rm (xiii)}&    &T^kTT^{\dag}_{E,F}+T^{\dag}_{E,F}TT^k=2T^k;& \\
&{\rm (xiv})&    &(T^{\sharp})^{k+l-1}=(T^{\dag}_{E,F})^l (T^{\sharp})^{k-1}= (T^{\sharp})^{k-1}(T^{\dag}_{E,F})^l;&\\
&{\rm (xv})&    &TT^{\dag}_{E,F}(T^{\sharp})^{k+l-1}=(T^{\dag}_{E,F})^k(T^{\sharp})^lT=(T^{\sharp})^lT(T^{\dag}_{E,F})^k;&\\
&{\rm (xvi})&  &TT^\dagger_{E,F}(T^k+\lambda
T^\dagger_{E,F})=(T^k+\lambda T^\dagger_{E,F})TT^\dagger_{E,F};&\\
&{\rm (xvii})&  &T^\dagger_{E,F}T(T^k+\lambda
T^\dagger_{E,F})=(T^k+\lambda T^\dagger_{E,F})T^\dagger_{E,F}T;&\\
&{\rm (xviii})&  &R(T+\lambda T^\dagger_{E,E})=R(T+\lambda
T^\dagger_{F,F})=R(\lambda T+T^3)\  and\  &\\
& &  &N(T+\lambda
T^\dagger_{E,E})=N(T+\lambda T^\dagger_{F,F})=N(\lambda T+T^3);&\\
&{\rm (xix})&  &R(T+\lambda T^\dagger_{E,F})=R(\lambda T+T^3)\
and\
N(T+\lambda T^\dagger_{E,F})=N(\lambda T+T^3),&\\
\end{align*}
where $k$, $l\in \mathbb N$ and $\lambda\in \mathbb
C\backslash\{0\}$.
\end{thm}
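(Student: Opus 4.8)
The plan is to exploit the characterization already available from Theorem \ref{thm20}: under the standing hypothesis that both $T^\dag_{E,F}$ and $T^\sharp$ exist, $T$ is weighted EP with weights $E$ and $F$ if and only if $T^\dag_{E,F}=T^\sharp$, equivalently $R(T^\dag_{E,F})=R(T)$ and $N(T^\dag_{E,F})=N(T)$. So for each of the nineteen conditions (i)--(xix) I would show it is equivalent to this single clean condition, rather than prove a long cycle of implications. The recurring tool is the direct sum decomposition $X=R(T)\oplus N(T)=R(T^\dag_{E,F})\oplus N(T)=R(T)\oplus N(T^\dag_{E,F})$ (Remark \ref{rema5}(b), Remark \ref{rema19}(iii)), together with the facts that $TT^\dag_{E,F}$, $T^\dag_{E,F}T$, $TT^\sharp=T^\sharp T$ are idempotents with prescribed ranges and kernels, and that $R(T^k)=R(T^\sharp)^k=R(T)$, $N(T^k)=N(T^\sharp)^k=N(T)$ for all $k\ge 1$ (Remark \ref{rema19}(v)). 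Since $T^\sharp$ exists, $T$ restricted to $R(T)$ is invertible on $R(T)$ and zero on $N(T)$; I would use this ``block'' picture repeatedly.

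First I would dispatch the purely algebraic identities (i)--(v), (vii)--(xvii). In one direction each follows instantly from $T^\dag_{E,F}=T^\sharp$ and the group-inverse axioms $T=TT^\sharp T$, $T^\sharp=T^\sharp TT^\sharp$, $TT^\sharp=T^\sharp T$: for instance (v) is just $T(T^\sharp)^2=T^\sharp=(T^\sharp)^2T$, (ix) is $T^k T^\sharp=T^\sharp T^k$, and (x) is $T^{2k-1}=T^\sharp T^{2k+1}T^\sharp$. For the converse, the strategy is to extract from each identity a range/kernel inclusion that, combined with the triple decomposition above, forces $R(T^\dag_{E,F})=R(T)$ and $N(T^\dag_{E,F})=N(T)$. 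Concretely: multiplying a given identity on the appropriate side by $T$, $T^\dag_{E,F}$, $T^\sharp$ or their powers, and using $R(T^\sharp)=R(T)$, $N(T^\sharp)=N(T)$, one reads off that $R(T^\dag_{E,F})\subseteq R(T)$ (or the reverse) and $N(T)\subseteq N(T^\dag_{E,F})$ (or the reverse); then Theorem \ref{thm20}(iii)--(vi) closes the argument. The power-indexed versions (vi)--(xv) reduce to the $k=1$ (and $l=1$) cases once one notes $R(T^k)=R(T)$ and $N(T^k)=N(T)$, so no genuinely new phenomenon occurs for $k,l>1$; I would prove the base case carefully and then remark that the general case is identical after replacing $R(T),N(T)$ by $R(T^k),N(T^k)$.

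Next I would treat the ``perturbation'' conditions (xvi), (xvii), (xix) and their symmetric companion (xviii). Here the point is that for $\lambda\neq 0$ the operator $\lambda T+T^3$ has the same range and null space as $T$: indeed on the block $R(T)$ it equals $T(\lambda+T^2)$ with $\lambda+T^2$ invertible on $R(T)$ (since $\sigma(T^2|_{R(T)})$ avoids $-\lambda$ up to shrinking — more simply, $\lambda+T^2$ is injective with dense range on $R(T)$, hence invertible there because $R(T)$ is $T$-invariant and closed), and it vanishes on $N(T)$; likewise $T^k+\lambda T^\dag_{E,F}$ and $T+\lambda T^\dag_{E,F}$ decompose along $R(T^\dag_{E,F})\oplus N(T)$ or $R(T)\oplus N(T^\dag_{E,F})$. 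So (xix) says $R(T+\lambda T^\dag_{E,F})=R(T)$ and $N(T+\lambda T^\dag_{E,F})=N(T)$, and a short computation with the decompositions shows this is equivalent to $R(T^\dag_{E,F})=R(T)$, $N(T^\dag_{E,F})=N(T)$. For (xvi)--(xvii) one observes that $TT^\dag_{E,F}$ (resp.\ $T^\dag_{E,F}T$) commutes with $T^k+\lambda T^\dag_{E,F}$ iff it commutes with $T^\dag_{E,F}$ (the $T^k$ part always commutes with the relevant idempotent because $R(T^k)=R(T)$, $N(T^k)=N(T)$), and that reduces to the EP condition. The main obstacle I anticipate is precisely this family: one must be careful that the invertibility of scalar-plus-operator expressions like $\lambda+T^2$ on the invariant subspace $R(T)$ is legitimate — this needs that $R(T)$ is closed (true, since $T^\sharp$ exists) and that the spectrum computation is done inside $L(R(T))$, not $L(X)$ — and one must handle the fact that $\lambda$ ranges over all nonzero complex numbers, so the argument cannot secretly use a favorable choice of $\lambda$. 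Everywhere else the work is bookkeeping; the substantive content is entirely carried by Theorem \ref{thm20} and the triple decomposition of $X$.
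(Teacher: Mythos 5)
Your overall strategy for (i)--(xv) --- extract a range inclusion and a null-space inclusion from each identity and invoke Theorem \ref{thm20}(iii)--(vi) together with the decompositions $X=R(T)\oplus N(T)=R(T^\dag_{E,F})\oplus N(T)=R(T)\oplus N(T^\dag_{E,F})$ --- is exactly the paper's route and is sound. But your treatment of the last four conditions contains a genuine error. The pivotal claim that ``for $\lambda\neq 0$ the operator $\lambda T+T^3$ has the same range and null space as $T$'' is false: there is no reason that $-\lambda\notin\sigma(T^2|_{R(T)})$, and your fallback (``injective with dense range, hence invertible'') is unjustified. Concretely, take $T=iI$ and $\lambda=1$; then $\lambda T+T^3=0$, so $R(\lambda T+T^3)=\{0\}\neq X=R(T)$. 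The same example shows that your reformulation of (xix) as ``$R(T+\lambda T^\dag_{E,F})=R(T)$ and $N(T+\lambda T^\dag_{E,F})=N(T)$'' cannot be what (xix) asserts: $T=iI$ is weighted EP and does satisfy (xix) (both operators there are $0$), yet it fails your reformulated condition. The paper never needs the false equality: it uses only the trivial one-sided inclusions $R(\lambda T+T^3)\subseteq R(T)$ and $N(T)\subseteq N(\lambda T+T^3)$ to extract $TT^\dag_{E,E}T^\dag_{E,E}=T^\dag_{E,E}$ and $T^\dag_{E,E}|_{N(T)}=0$ from the hypotheses of (xviii), and for the converse it verifies (xviii)--(xix) through the factorizations $T+\lambda T^\sharp=(T^3+\lambda T)(T^\sharp)^2=(T^\sharp)^2(T^3+\lambda T)$ and $T^3+\lambda T=(T+\lambda T^\sharp)T^2=T^2(T+\lambda T^\sharp)$. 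Your block analysis of $\lambda+T^2$ must be replaced by an argument of this kind.

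A smaller but still real gap occurs in (xvi)--(xvii): the parenthetical ``the $T^k$ part always commutes with the relevant idempotent'' is wrong. Since $TT^\dag_{E,F}$ is the projection onto $R(T)$ along $N(T^\dag_{E,F})$, one always has $TT^\dag_{E,F}T^k=T^k$, but $T^kTT^\dag_{E,F}=T^k$ holds precisely when $N(T^\dag_{E,F})\subseteq N(T)$ --- which is half of what you are trying to prove. Hence the asserted equivalence ``commutes with $T^k+\lambda T^\dag_{E,F}$ iff commutes with $T^\dag_{E,F}$'' is unjustified as stated. This one is repairable inside your framework: writing the commutator out and using $TT^\dag_{E,F}T=T$ and $T^\dag_{E,F}TT^\dag_{E,F}=T^\dag_{E,F}$ reduces (xvi) to $T^k(I-TT^\dag_{E,F})=\lambda(I-TT^\dag_{E,F})T^\dag_{E,F}$, whose left side has range in $R(T)$ and whose right side has range in $N(T^\dag_{E,F})$, so both vanish because $X=R(T)\oplus N(T^\dag_{E,F})$; this yields $T^k=T^kTT^\dag_{E,F}$ and $TT^\dag_{E,F}T^\dag_{E,F}=T^\dag_{E,F}$, which is exactly what the paper obtains by multiplying the rewritten identity by $TT^\dag_{E,F}$ on each side before applying Theorem \ref{thm20}(v). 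As written, however, neither (xvi)--(xvii) nor (xviii)--(xix) is actually proved.
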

\begin{proof} It is not difficult to prove that the condition of being weighted EP with weights $E$ and $F$
implies statements (i)--(xvii). On the other hand, if
statement (i) holds, then
$$
R(TT^{\sharp}T^{\dag}_{E,F})=R(T^{\sharp}TT^{\dag}_{E,F})=T^{\sharp}R(TT^{\dag}_{E,F})=R(T).
$$
As a result, $R(T)\subseteq R(T^{\dag}_{E,F})$. In addition, if
$x\in N(T^{\dag}_{E,F})$, then $T^{\sharp}T(x)\in R(T)\cap
N(T^{\dag}_{E,F})=\{0\}$, Thus, $x\in N(T^{\sharp}T)=N(T)$ and
$N(T^{\dag}_{E,F})\subseteq N(T)$. Consequently, according to
Theorem \ref{thm20}(vi), the condition of being weighted EP is
satisfied.\par
 \markboth{  }{ \hskip1.5truecm \rm ENRICO BOASSO, DRAGAN S. DJORDJEVI\'C AND  DIJANA  MOSI\'C }
\indent Suppose that statement (ii) holds. If $x\in N(T)$, then
$TT^{\dag}_{E,F}(x)\in R(T)\cap N(T^{\dag}_{E,F}T)= R(T)\cap N(T)
=\{0\}$. Thus, $x\in N(TT^{\dag}_{E,F})=N(T^{\dag}_{E,F})$ and
$N(T)\subseteq N(T^{\dag}_{E,F})$. On the other hand, since
$$R(T^{\dag}_{E,F}TTT^{\dag}_{E,F})=T^{\dag}_{E,F}T( R(TT^{\dag}_{E,F}))=T^{\dag}_{E,F}T(R(T))
=R(T^{\dag}_{E,F}T)=R(T^{\dag}_{E,F}),$$
$R(T^{\dag}_{E,F})\subseteq R(T)$. Therefore, according to Theorem
\ref{thm20}(iii), $T$ is weighted EP with weights $E$ and $F$.\par

\indent If statement (iii) holds, then
$R(T)=R(TT^{\dag}_{E,F})=R(T^{\sharp}TT^{\dag}_{E,F})\subseteq
R(T^{\dag}_{E,F})$. In addition, if $x\in N(T^{\dag}_{E,F})$, then
$T^{\sharp}T(x)\in R(T)\cap N(T^{\dag}_{E,F})=\{0\}$. Thus, $x\in
N(T^{\sharp}T)=N(T)$ and $N(T^{\dag}_{E,F})\subseteq N(T)$. In
particular, according to Theorem \ref{thm20}(vi), the condition of
being weighted EP is satisfied.\par

\indent Suppose that statement (iv) holds. According to the equality $(T^{\dag}_{E,F})^2T^{\sharp}=T^{\dag}_{E,F}T^{\sharp}T^{\dag}_{E,F}$,
\begin{align*}
(T^{\dag}_{E,F})^2T^{\sharp}&=((T^{\dag}_{E,F})^2T^{\sharp})TT^{\sharp}=T^{\dag}_{E,F}T^{\sharp}T^{\dag}_{E,F}TT^{\sharp}\\
               &=T^{\dag}_{E,F}(T^{\sharp})^2TT^{\dag}_{E,F}TT^{\sharp}=T^{\dag}_{E,F}(T^{\sharp})^2.\\
\end{align*}
As a result,
\begin{align*}
TT^{\dag}_{E,F}&=T^3(T^{\sharp})^2T^{\dag}_{E,F}=T^3T^{\dag}_{E,F}T(T^{\sharp})^2T^{\dag}_{E,F}=T^3(T^{\dag}_{E,F}T^{\sharp}T^{\dag}_{E,F})\\
   &=T^3((T^{\dag}_{E,F})^2T^{\sharp})=T^3T^{\dag}_{E,F}(T^{\sharp})^2=T^3T^{\dag}_{E,F}T(T^{\sharp})^3=TT^{\sharp}.\\
\end{align*}

\indent A similar argument, using in particular the equality $T^{\dag}_{E,F}T^{\sharp}T^{\dag}_{E,F}=T^{\sharp}(T^{\dag}_{E,F})^2$,
proves that $T^{\dag}_{E,F}T=TT^{\sharp}$. Therefore, $T$ is weighted EP with
weights $E$ and $F$.\par

\indent Next consider statement (v). The condition $T(T^{\dag}_{E,F})^2=T^{\sharp}=(T^{\dag}_{E,F})^2T$ implies that
$$
TT^{\dag}_{E,F}=TT^{\sharp}TT^{\dag}_{E,F}=TT(T^{\dag}_{E,F})^2TT^{\dag}_{E,F}=T(T(T^{\dag}_{E,F})^2)=TT^{\sharp}
$$
and
$$
T^{\dag}_{E,F}T=T^{\dag}_{E,F}TT^{\sharp}T=T^{\dag}_{E,F}T(T^{\dag}_{E,F})^2TT=((T^{\dag}_{E,F})^2T)T=T^{\sharp}T.
$$
Consequently, $T$ is weighted EP with weights $E$ and $F$.\par

\indent If statement (vi) holds, then $R(T)=R(T^k)\subseteq
R(T^{\dag}_{E,F})$ and $N(T^{\dag}_{E,F})\subseteq N(T^k)=N(T)$.
Therefore, according to Theorem \ref{thm20}(vi), $T$ is weighted
EP with weights $E$ and $F$. A similar argument, using in
particular that the ranges and null spaces of $T$ and $T^{\sharp}$
coincide, proves that statement (vii) is equivalent to the
condition of being weighted EP.\par

\indent Suppose that statement (viii) holds. Then,
$$R(T^{\dag}_{E,F})=R(T^{\dag}_{E,F}T)=T^{\dag}_{E,F}(R(T))
=T^{\dag}_{E,F}(R((T^{\sharp})^k))=R(T^{\dag}_{E,F}(T^{\sharp})^k)\subseteq R((T^{\sharp})^k) =R(T).$$
In addition, if $x\in N(T)=N((T^{\sharp})^k)$, then $T^{\dag}_{E,F}(x)\in R(T^{\dag}_{E,F})\cap N((T^{\sharp})^k)=
 R(T^{\dag}_{E,F})\cap N(T)=\{0\}$. As a result, $N(T)\subseteq  N(T^{\dag}_{E,F})$. Consequently,
according to Theorem \ref{thm20}(iii), $T$ is weighted EP with
weights $E$ and $F$. A similar argument can be applied to
statement (ix).\par

\indent Statement (x) implies that $N(T^{\dag}_{E,F})\subseteq N(T^{2k-1})=N(T)$ and $R(T)=R(T^{2k-1})\subseteq
R(T^{\dag}_{E,F})$. As a result, according to Theorem \ref{thm20}(vi), the condition of being weighted EP is satisfied.\par

\indent Suppose that statement (xi) holds. Now well, since
$X=R(T^{\dag}_{E,F}T)\oplus N((T^{\sharp})^k)$,
$R(T)=R((T^{\sharp})^k)\subseteq R(T^{\dag}_{E,F})$. In addition,
if $x\in N(T^{\dag}_{E,F})$, then $T^{\dag}_{E,F}T(x)\in
R(T^{\dag}_{E,F})\cap N((T^{\sharp})^k)= R(T^{\dag}_{E,F})\cap
N(T)=\{0\}$. Hence, $x\in N( T^{\dag}_{E,F}T)=N(T)$ and
$N(T^{\dag}_{E,F})\subseteq N(T)$. Consequently,  according to
Theorem \ref{thm20}(vi), $T$ is weighted EP with weights $E$ and
$F$.

\indent Suppose that statement (xii) holds.
Then, it is not difficult to prove that $R(T)\subseteq R(T^{\dag}_{E,F})$ and $N(T^{\dag}_{E,F})\subseteq N(T)$.
In particular, according to Theorem \ref{thm20}(vi), $T$ is weighted EP with weights $E$ and $F$.\par
\markright{\hskip4truecm WEIGHTED MOORE-PENROSE INVERSE } 
\indent If statement (xiii) holds, then $R(T^{\dag}_{E,F})=R(T^{\dag}_{E,F}T)=R(T^{\dag}_{E,F}TT^k)\subseteq R(T^k)=R(T)$.
In addition, if $x\in N(T)=N(T^k)$, then $T^{\dag}_{E,F}(x)\in N(T^{k+1})=N(T)$, which implies that
$x\in N(T^{\dag}_{E,F})$. Thus,  $N(T)\subseteq N(T^{\dag}_{E,F})$. However, according to
 Theorem \ref{thm20}(iii), the condition of being weighted EP holds.\par

\indent Statement (xiv) implies that  $R(T)\subseteq R(T^{\dag}_{E,F})$ and $N(T^{\dag}_{E,F}))\subseteq N(T)$.
Therefore, according to  Theorem \ref{thm20}(vi), $T$ is weighted EP with weights $E$ and $F$.\par

\indent Suppose that statement (xv) holds. Then,
$$R(T)=R(TT^{\dag}_{E,F}T)=TT^{\dag}_{E,F}R(T)
=TT^{\dag}_{E,F}R((T^{\sharp})^{k+l-1})=R(TT^{\dag}_{E,F}(T^{\sharp})^{k+l-1})
\subseteq R(T^{\dag}_{E,F}).$$ In addition, if $x\in
N(T^{\dag}_{E,F})$, then $(T^{\sharp})^{k+l-1}(x)\in
N(TT^{\dag}_{E,F})\cap R(T)= N(T^{\dag}_{E,F})\cap R(T)=\{0\}$.
Then $N(T^{\dag}_{E,F})\subseteq N(T)$. Therefore,  according to
Theorem \ref{thm20}(vi), the condition of being weighted EP is
satisfied.

\indent Statement (xvi) can be rewritten as
\begin{equation}T^k+\lambda
TT^\dagger_{E,F}T^\dagger_{E,F}=T^kTT^\dagger_{E,F}+\lambda
T^\dagger_{E,F}.\label{jedn4}\end{equation} 

Then, multiplying
(\ref{jedn4}) from the left side by $TT^\dagger_{E,F}$, 
$$T^k+\lambda
TT^\dagger_{E,F}T^\dagger_{E,F}=T^kTT^\dagger_{E,F}+\lambda
TT^\dagger_{E,F}T^\dagger_{E,F}.$$ Hence,
$T^k=T^kTT^\dagger_{E,F}$, which implies that
$N(T^\dagger_{E,F})\subseteq N(T^k)=N(T)$. Similarly, multiplying
(\ref{jedn4}) from the right side by $TT^\dagger_{E,F}$,
$$T^kTT^\dagger_{E,F}+\lambda
TT^\dagger_{E,F}T^\dagger_{E,F}=T^kTT^\dagger_{E,F}+\lambda
T^\dagger_{E,F}.$$ As a result,
$TT^\dagger_{E,F}T^\dagger_{E,F}=T^\dagger_{E,F}$,  which implies that 
$R(T^\dagger_{E,F})\subseteq R(T)$. Consequently, according to Theorem \ref{thm20}(v), $T$ is weighted EP with weights $E$ and $F$.\par

\indent A similar argument proves that statement (xvii) implies statement (i).\par

\indent Suppose that statements (xviii) holds. The equality $R(T+\lambda
T^\dagger_{E,E})=R(\lambda T+T^3)$ implies that for $x\in X$ there
exists $y\in X$ such that $(T+\lambda T^\dagger_{E,E})x=(\lambda
T+T^3)y$. Then
\begin{eqnarray*}
(T+\lambda T
T^\dagger_{E,E}T^\dagger_{E,E})x&=&TT^\dagger_{E,E}(T+\lambda
T^\dagger_{E,E})x=TT^\dagger_{E,E}(\lambda T+T^3)y\\
&=&(\lambda T+T^3)y=(T+\lambda T^\dagger_{E,E})x,
\end{eqnarray*}
which implies that $TT^\dagger_{E,E}T^\dagger_{E,E}x=T^\dagger_{E,E}x$
($x\in X$). Thus, $R(T^\dagger_{E,E})\subseteq R(T)$. Let
$x\in N(T)$. Now well, $(\lambda T+T^3)x=0$ and, since
$N(T+\lambda T^\dagger_{E,E})=N(\lambda T+T^3)$,
$(T+\lambda T^\dagger_{E,E})x=0$, i.e., $T^\dagger_{E,E}x=0$.
Therefore, $N(T)\subseteq N(T^\dagger_{E,E})$, and according to  Theorem \ref{thm20}(iii), $T$ is
weighted EP with weights $E$ and $E$.
Similarly, using that $R(T+\lambda T^\dagger_{F,F})=R(\lambda T+T^3)$
and $N(T+\lambda T^\dagger_{F,F})=N(\lambda T+T^3)$, it is possible to prove that 
that $T$ is weighted EP with weights $F$ and $F$. According then to Theorem \ref{thm20}(xii), $T$ is weighted EP with weights $E$ and $F$.

\indent On the other hand, if $T$ is weighted EP with weights $E$ and $F$,
then $T^{\sharp}=T^\dagger_{E,E}=T^\dagger_{F,F}$ (Theorem
\ref{thm20}(xii)). Since
$$T+\lambda T^{\sharp}=(T^3+\lambda T)(T^{\sharp})^2=(T^{\sharp})^2(T^3+\lambda T)$$
and
$$T^3+\lambda T=(T+\lambda T^{\sharp})T^2=T^2(T+\lambda T^{\sharp}),$$ statement (xviii) holds. A similar
argument proves that statement (xix) is equivalent to the condition of being weighted EP.
\end{proof}

\indent Next weighted EP Banach algebra elements will be characterized.\par

  \markboth{  }{ \hskip1.5truecm \rm ENRICO BOASSO, DRAGAN S. DJORDJEVI\'C AND  DIJANA  MOSI\'C }

\begin{thm}\label{thm22}
Let $A$ be a unital Banach algebra and consider $e$, $f\in A$ two invertible
and positive elements. Suppose in addition that $a\in A$ is such that $a^\dag_{e,f}$ and
$a^{\sharp}$ exist. Then, the following statements are equivalent.
\begin{itemize}

\item[\rm (i)] $a$ is weighted EP with weights  {\rm$e$} and {\rm $f$};

\item[\rm(ii)] $a^{\dag}_{e,f}A=aA$ and $(a^{\dag}_{e,f})^{-1}(0)=a^{-1}(0)$;

\item[\rm(iii)] $a^\dag_{e,f}A\subset aA$ and $a^{-1}(0)\subset
(a^\dag_{e,f})^{-1}(0)$;

\item[\rm(iv)] $aA\subset a^\dag_{e,f}A$ and $a^{-1}(0)\subset
(a^\dag_{e,f})^{-1}(0)$;

\item[\rm(v)] $a^\dag_{e,f}A\subseteq aA$ and
$(a^\dag_{e,f})^{-1}(0)\subset a^{-1}(0)$;

\item[\rm(vi)] $aA\subset a^\dag_{e,f}A$ and
$(a^\dag_{e,f})^{-1}(0)\subset a^{-1}(0)$;

\item[\rm (vii)] $a^\dag_{e,f}=a(a^\dag_{e,f})^2=(a^\dag_{e,f})^2a$;

\item[\rm (viii)] $a^\dag_{e,f}$ is weighted EP with weights  {\rm$f$} and {\rm $e$};

\item[\rm (ix)] $a^{\dag}_{e,f}a^{\sharp}a+aa^{\sharp}a^{\dag}_{e,f}=2a^{\dag}_{e,f}$;

\item[\rm (x)] $(a^{\dag}_{e,f})^2a^{\sharp}=a^{\dag}_{e,f}a^{\sharp}a^{\dag}_{e,f}=a^{\sharp}(a^{\dag}_{e,f})^2$;

\item[\rm (xi)] $a(a^{\dag}_{e,f})^2=a^{\sharp}=(a^{\dag}_{e,f})^2a$;

\item[\rm (xii)] $aa^{\dag}_{e,f}a^{\dag}_{e,f}a=a^{\dag}_{e,f}aaa^{\dag}_{e,f}$;

\item[\rm(xiii)] $aa^{\sharp}a^{\dag}_{e,f}=a^{\dag}_{e,f}a^{\sharp}a$;

\item[\rm(xiv)] $a\in a^{\dag}_{e,f}A^{-1}\cap A^{-1}a^{\dag}_{e,f}$;

\item[\rm(xv)] there exist $u$, $v\in A$ such that
$a=a^{\dag}_{e,f}u=va^{\dag}_{e,f}$ and $uA=A$ and
$v^{-1}(0)=\{0\}$;

\item[\rm(xvi)] $a\in a^{\dag}_{e,f}A\cap Aa^{\dag}_{e,f}$;

\item[\rm(xvii)] if $b\in A$ is such that $ab=ba$, then $a^{\dag}_{e,f}b=ba^{\dag}_{e,f}$;

\item[\rm(xviii)] there exists a holomorphic function $f\colon U\to \mathbb C$, $\sigma(a)\subseteq U$,
such that $a^{\dag}_{e,f}=f(a)$;

\item[\rm (xix)] $(a^{\sharp})^ka^{\dag}_{e,f}a=(a^{\dag}_{e,f})^k$;

\item[\rm (xx)] $a^k=a^{\dag}_{e,f}aa^k=a^kaa^{\dag}_{e,f}$;

\item[\rm (xxi)]  $(a^{\dag}_{e,f})^k=(a^{\sharp})^k$;

\item[\rm (xxii)] $(a^{\sharp})^k a^{\dag}_{e,f}=a^{\dag}_{e,f}(a^{\sharp})^k $;

\item[\rm (xxiii)] $a^k a^{\dag}_{e,f}=a^{\dag}_{e,f}a^k$;

\item[\rm (xxiv)] $a^{2k-1}= a^{\dag}_{e,f} a^{2k+1}a^{\dag}_{e,f}$;

\item[\rm (xxv)] $a(a^{\dag}_{e,f})^{k+1}=(a^{\sharp})^k=(a^{\dag}_{e,f})^{k+1}a$;

\item[\rm (xxvi)] $a^kaa^{\dag}_{e,f}+a^{\dag}_{e,f}aa^k=2a^k$;

\item[\rm (xxvii)] $(a^{\sharp})^{k+l-1}=(a^{\dag}_{e,f})^l (a^{\sharp})^{k-1}= (a^{\sharp})^{k-1}(a^{\dag}_{e,f})^l$;

\item[\rm (xxviii)] $aa^{\dag}_{e,f}(a^{\sharp})^{k+l-1}=(a^{\dag}_{e,f})^k(a^{\sharp})^la=(a^{\sharp})^la(a^{\dag}_{e,f})^k$,
\markright{\hskip4truecm WEIGHTED MOORE-PENROSE INVERSE } 

\item[\rm (xxix)] $a$ is weighted  EP with weights $f$ and $e$;

\item[\rm (xxx)] $a$ is both weighted EP with weights  $e$ and $e$
and with weights  $f$ and $f$;

\item[\rm (xxxi)] $a^k$ is weighted EP with weights $e$ and $f$;

\item[\rm (xxxii)] $a^{\sharp}$ is weighted EP with weights $e$
and $f$;

\item[\rm (xxxiii)]
$aa^{\sharp}=aa^\dagger_{e,e}=aa^\dagger_{f,f}$ {\rm (}or
$aa^{\sharp}=a^\dagger_{e,e}a=a^\dagger_{f,f}a${\rm )};

\item[\rm (xxxiv)] $aa^{\sharp}=aa^\dagger_{e,f}=aa^\dagger_{f,e}$
{\rm (}or $aa^{\sharp}=a^\dagger_{f,e}a=a^\dagger_{e,f}a${\rm )};

\item[\rm (xxxv)] $aa^\dagger_{e,f}(a^k+\lambda
a^\dagger_{e,f})=(a^k+\lambda a^\dagger_{e,f})aa^\dagger_{e,f}$;

\item[\rm (xxxvi)] $a^\dagger_{e,f}a(a^k+\lambda
a^\dagger_{e,f})=(a^k+\lambda a^\dagger_{e,f})a^\dagger_{e,f}a$;

\item[\rm (xxxvii)] $(a+\lambda a^\dagger_{e,e})A=(a+\lambda
a^\dagger_{f,f})A=(\lambda a+a^3)A$  and  

\item [\rm  ] $(a+\lambda
a^\dagger_{e,e})^{-1}(0)=(a+\lambda a^\dagger_{f,f})^{-1}(0)=(\lambda
a+a^3)^{-1}(0)$;

\item[\rm (xxxviii)] $(a+\lambda a^\dagger_{e,f})A=(\lambda
a+a^3)A$ and  $(a+\lambda a^\dagger_{e,f})^{-1}(0)=(\lambda
a+a^3)^{-1}(0)$,

 \end{itemize}
where $k$, $l\in \mathbb N$ and $\lambda\in \mathbb
C\setminus\{0\}$.
\end{thm}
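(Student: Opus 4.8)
The plan is to transfer the entire list to the operator setting, already settled in Theorems \ref{thm20} and \ref{thm21}, by means of the left regular representation $L\colon A\to L(A)$, $a\mapsto L_a$. First I would set down the dictionary. The map $L$ is a unital, injective, multiplicative linear map, so $L_{ab}=L_aL_b$, $L_{a+b}=L_a+L_b$, $L_{a^k}=(L_a)^k$, and $L_a=L_b\iff a=b$; moreover $R(L_a)=aA$ and $N(L_a)=a^{-1}(0)$, so that one-sided-ideal inclusions and equalities among $a$, $a^{\dag}_{e,f}$, $a^{\sharp}$ translate into range and null-space inclusions among $L_a$, $L_{a^{\dag}_{e,f}}$, $L_{a^{\sharp}}$. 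By Lemma \ref{lem9}, $L_e$ and $L_f$ are invertible and positive in $L(A)$; since $a^{\dag}_{e,f}$ and $a^{\sharp}$ exist by hypothesis, Theorem \ref{thm8} gives $(L_a)^{\dag}_{L_e,L_f}=L_{a^{\dag}_{e,f}}$ (and likewise for the weight pairs $(e,e)$, $(f,f)$, $(f,e)$ whenever the corresponding weighted inverse of $a$ is assumed to exist) while Remark \ref{rema19}(i) gives $(L_a)^{\sharp}=L_{a^{\sharp}}$; in particular $L_a$ satisfies the hypotheses of Theorems \ref{thm20} and \ref{thm21}. Finally, by injectivity of $L$, $L_{a^{\dag}_{e,f}}L_a=L_aL_{a^{\dag}_{e,f}}$ holds iff $a^{\dag}_{e,f}a=aa^{\dag}_{e,f}$, so $L_a$ is weighted EP with weights $L_e,L_f$ iff $a$ is; that is, statement (i) for $a$ is statement (i) of Theorem \ref{thm20} for $L_a$.

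With this dictionary in place, the statements (ii)--(vii), (ix)--(xvi), (xix)--(xxviii) and (xxxiii)--(xxxviii) are exactly the images under $L$ of statements of Theorem \ref{thm20} or Theorem \ref{thm21} for $L_a$ (with the appropriate pair of weights): each is an identity among words in $a$, $a^{\dag}_{e,f}$, $a^{\sharp}$ and scalars, or an inclusion/equality among the ideals $aA$, $a^{\dag}_{e,f}A$, $a^{-1}(0)$, $(a^{\dag}_{e,f})^{-1}(0)$. For instance (ii), (xii), (xiii) and (xxxvii) of Theorem \ref{thm22} become (ii) of Theorem \ref{thm20} and (ii), (i), (xviii) of Theorem \ref{thm21}, respectively, and the rest are matched similarly. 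Since every item of Theorems \ref{thm20}--\ref{thm21} is equivalent to ``$L_a$ is weighted EP with weights $L_e,L_f$'', hence to (i), all these items are equivalent to (i). Two small points: (viii), ``$a^{\dag}_{e,f}$ is weighted EP with weights $f,e$'', reduces via $(a^{\dag}_{e,f})^{\dag}_{f,e}=a$ (Remark \ref{rema5}(a)(i)) to ``$a$ commutes with $a^{\dag}_{e,f}$'', i.e.\ to (i); and (xvi), ``$a\in a^{\dag}_{e,f}A\cap Aa^{\dag}_{e,f}$'', unwinds to the conjunction $aA\subseteq a^{\dag}_{e,f}A$ and $(a^{\dag}_{e,f})^{-1}(0)\subseteq a^{-1}(0)$ (that is, Theorem \ref{thm20}(vi) for $L_a$), the converse implication from (i) coming from $a=a^{\sharp}a^{2}=a^{2}a^{\sharp}$.

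The items that assert that some power of $a$, the group inverse $a^{\sharp}$, or the weighted inverse $a^{\dag}_{e,f}$ is itself weighted EP for some pair of the weights $e,f$ --- namely (xxix), (xxx), (xxxi), (xxxii) (and, as just noted, (viii)) --- I would handle exactly as in the proof of Theorem \ref{thm20}: by Remark \ref{rema19}(ii) each such assertion amounts to $aa^{\sharp}\in H(A^{e})\cap H(A^{f})$, and one uses $a^{k}(a^{\sharp})^{k}=aa^{\sharp}$ and $(a^{\sharp})^{\sharp}=a^{\sharp}$. This leaves (xiv), (xv), (xvii), (xviii), which have no operator counterpart in Theorems \ref{thm20}--\ref{thm21}. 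For (xviii): if $a$ is weighted EP then $a^{\dag}_{e,f}=a^{\sharp}$, and since $a^{\sharp}$ exists, $0$ is either absent from or an isolated point of $\sigma(a)$, so the function $h$ equal to $1/z$ near the nonzero part of $\sigma(a)$ and to $0$ near $0$ is holomorphic on a neighbourhood $U\supseteq\sigma(a)$, and the Riesz functional calculus yields $h(a)=a^{\sharp}=a^{\dag}_{e,f}$; conversely, if $a^{\dag}_{e,f}=h(a)$ for such an $h$, then $a^{\dag}_{e,f}$ commutes with $a$, whence (i) by Definition \ref{def17}. Then (xviii)$\Rightarrow$(xvii) is immediate (any holomorphic function of $a$ commutes with every element commuting with $a$), and (xvii)$\Rightarrow$(i) follows by taking $b=aa^{\sharp}$ in (xvii): $aa^{\sharp}$ commutes with $a$ (as $a\,aa^{\sharp}=a^{2}a^{\sharp}=a=aa^{\sharp}a$), so (xvii) gives $aa^{\sharp}a^{\dag}_{e,f}=a^{\dag}_{e,f}a^{\sharp}a$, which is (xiii). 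Finally (xiv) and (xv) fit into the chain (i)$\Rightarrow$(xiv)$\Rightarrow$(xv)$\Rightarrow$(ii): the first step uses $a=a^{\sharp}(a^{2}+1-aa^{\sharp})=(a^{2}+1-aa^{\sharp})a^{\sharp}$ with $a^{2}+1-aa^{\sharp}$ invertible of inverse $(a^{\sharp})^{2}+1-aa^{\sharp}$, and the last uses that $uA=A$ and $v^{-1}(0)=\{0\}$ force $a^{\dag}_{e,f}A=aA$ and $(a^{\dag}_{e,f})^{-1}(0)=a^{-1}(0)$.

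The main obstacle is organizational rather than conceptual: carrying out, item by item, the correct pairing of the thirty-eight statements of Theorem \ref{thm22} with the statements of Theorems \ref{thm20}--\ref{thm21} (and checking that the latter's hypotheses genuinely transfer to $L_a$ via Theorem \ref{thm8} and Remark \ref{rema19}(i)). The only genuinely new ingredient is the classical fact, used for (xvii)--(xviii), that a group-invertible Banach-algebra element is a holomorphic function of itself.
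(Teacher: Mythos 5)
Your proposal is correct and follows essentially the same route as the paper: the bulk of the equivalences is transferred to the operator statements of Theorems \ref{thm20} and \ref{thm21} through the left regular representation (via Theorem \ref{thm8}, Remark \ref{rema19}(i) and Lemma \ref{lem9}), and items (xiv)--(xviii) are treated separately by direct algebraic arguments and the holomorphic functional calculus. The only differences are cosmetic --- the paper closes (xv) through (xvi)$\Rightarrow$(vi) rather than directly to (ii), takes $b=a$ in (xvii), and cites Koliha's theorem for (xviii) where you construct the holomorphic function explicitly --- none of which changes the structure of the argument.
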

\begin{proof}Let $a\in A$ that satisfies the hypothesis of the Theorem.
It is not difficult to prove that if $a$ is weighted EP with
weights $e$ and $f$, then  statements (i)-(xiii) and (xix)-(xxxviii)
hold. To prove the converse in this case, consider $L_a\colon A\to
A$ and apply the left multiplication representation to the
statements. According Theorem \ref{thm8} and Remark
\ref{rema19}(i), each statement is relaborated with $L_a$,
$(L_a)^{\sharp}$, and $(L_a)^{\dag}_{L_e,L_f}$ instead of $a$,
$a^{\sharp}$ and $a^{\dag}_{e,f}$ respectively. Then apply
Theorems \ref{thm20} and \ref{thm21} to prove that $L_a\in L(A)$
is weighted EP with weights $L_e$ and $L_f$ (recall that according
to Lemma \ref{lem9}, $L_e$, $L_f\in L(A)$ are invertible and
positive). Finally, apply again  Theorem \ref{thm8} to prove that
$a$ is weighted EP with weights $e$ and $f$.\par
  \markboth{  }{ \hskip1.5truecm \rm ENRICO BOASSO, DRAGAN S. DJORDJEVI\'C AND  DIJANA  MOSI\'C }
\indent To prove the equivalence between the condition of being weighted EP and  statement (xiv), note that if $aa^{\dag}_{e,f}=a^{\dag}_{e,f}a$, then
it is not difficlt to prove that
$a=(a^2+1-a^{\dag}_{e,f}a)a^{\dag}_{e,f}=a^{\dag}_{e,f}(a^2+1-aa^{\dag}_{e,f})$ and
$(a^2+1-a^{\dag}_{e,f}a)^{-1}=(a^{\dag}_{e,f})^2+1-a^{\dag}_{e,f}a$. Clearly,
statement (xiv) implies statement (xv), which in turn implies statement (xvi).
On the other hand, statement (xvi) implies statement (vi).\par

\indent Now consider statement (xvii).  If $a^{\dag}_{e,f}$ exists, then since the group inverse is unique, $a^{\dag}_{e,f}=a^{\sharp}$.
Then, according to \cite[Lemma 1.4.5]{DR}, statement (xvii) holds. If, on the
other hand $a\in A$ satisfies the condition of statement (xvii), then applying this condition to $b=a$,
$a$ is weighted EP with weights $e$ and $f$.\par

\indent Finally, to prove the equivalence between statements (i) and (xviii), suppose that  $a^{\dag}_{e,f}$ exists. Then, as in the previous paragraph,
$a^{\dag}_{e,f}=a^{\sharp}$. Then, according to \cite[Theorem 4.4]{Ko}, statement (xviii) holds.
On the other hand, if $a^{\dag}_{e,f}=f(a)$, where $f\colon U\to C$ is holomorphic ($\sigma (a)\subset U)$, then
according to \cite[Proposition 4.9, Chapter VII]{C}, $a$ is weighted EP with weights $e$ and $f$.
\end{proof}

   \markboth{  }{ \hskip1.5truecm \rm ENRICO BOASSO, DRAGAN S. DJORDJEVI\'C AND  DIJANA  MOSI\'C }

\noindent ENRICO BOASSO\par
\noindent E-mail address: enrico\_odisseo@yahoo.it
\vskip.2truecm
\noindent DRAGAN S. DJORDJEVI\'C\par
\noindent E-mail address:  dragan@pmf.ni.ac.rs 
\vskip.2truecm
\noindent DIJANA  MOSI\'C \par
\noindent E-mail address: dijana@pmf.ni.ac.rs

\end{document}